\documentclass[12pt,reqno]{amsart}
\usepackage{amsfonts}
\usepackage{amssymb,color}
\usepackage{amssymb}

\textwidth=15cm \textheight=23cm \oddsidemargin=0.5cm
\evensidemargin=0.5cm

\def\normo#1{\left\|#1\right\|}

\def\aabs#1{\left|#1\right|}
\def\brk#1{\left(#1\right)}

\def\norm#1{\|#1\|}
\def\jb#1{\langle#1\rangle}
\def\wt#1{\widetilde{#1}}

\newcommand{\N}{{\mathbb N}}

\newcommand{\R}{{\mathbb R}}
\newcommand{\C}{{\mathbb C}}
\newcommand{\Z}{{\mathbb Z}}
\newcommand{\ft}{{\mathcal{F}}}
\newcommand{\Hl}{{\mathcal{H}}}

\newcommand{\les}{{\lesssim}}
\newcommand{\ges}{{\gtrsim}}

\newcommand{\supp}{{\mbox{supp}}}
\newcommand{\sgn}{{\mbox{sgn}}}

\newcommand{\Lr}{{\mathcal{L}}}

\def\jb#1{\langle#1\rangle}
\def\norm#1{\|#1\|}
\def\normo#1{\left\|#1\right\|}

\def\wt#1{\widetilde{#1}}

\def\aabs#1{\left|#1\right|}

\newcommand{\cir}{\mathbb{S}}

\newcommand{\al}{\alpha}

\newcommand{\e}{\varepsilon}

\newcommand{\x}{\xi}
\newcommand{\y}{\eta}

\newcommand{\De}{\Delta}
\newcommand{\Des}{\Delta_\omega}
\newcommand{\Om}{\Omega}

\newcommand{\p}{\partial}
\newcommand{\na}{\nabla}

\newcommand{\lec}{\lesssim}

\newcommand{\I}{\infty}

\newcommand{\LR}[1]{{\langle #1 \rangle}}

\newcommand{\EQ}[1]{\begin{equation}\begin{split} #1 \end{split}\end{equation}}
\setlength{\marginparwidth}{2cm}

\newcommand{\Del}[1]{}
\newcommand{\CAS}[1]{\begin{cases} #1 \end{cases}}

\newcommand{\pt}{&}
\newcommand{\pr}{\\ &}

\numberwithin{equation}{section}

\newtheorem{thm}{Theorem}[section]
\newtheorem{cor}[thm]{Corollary}
\newtheorem{lem}[thm]{Lemma}
\newtheorem{prop}[thm]{Proposition}

\theoremstyle{remark}
\newtheorem{rem}{Remark}

\theoremstyle{remark}

\theoremstyle{definition}

\begin{document}

\title[Strichartz estimates for Schr\"odinger equation]{Sharp spherically averaged Strichartz estimates for the Schr\"odinger
equation}
\author[Z. Guo]{Zihua Guo}

\address{LMAM, School of Mathematical Sciences, Peking
University, Beijing 100871, China \& Department of Mathematics, The
Australian National University, Canberra, ACT 0200, Australia}

\email{zihuaguo@math.pku.edu.cn}

\begin{abstract}
We prove generalized Strichartz estimates with weaker angular
integrability for the Schr\"odinger equation. Our estimates are
sharp except some endpoints. Then we apply these new estimates to
prove the scattering for the 3D Zakharov system with small data in
the energy space with low angular regularity. Our results improve
the results obtained recently in \cite{GLNW}.
\end{abstract}

\maketitle

\tableofcontents

\section{Introduction}

In this paper, we continue the previous work \cite{GLNW} to study
the generalized Strichartz estimates for the following
Schr\"odinger-type dispersive equations
\begin{align}\label{eq:schr}
i\partial_{t}u+D^a u=0,\ u(0,x)=f(x)
\end{align}
where $u(t,x):\R\times \R^{d}\to \C$, $D=\sqrt{-\Delta}$, $a>0$. We
are mainly concerned with following estimates
\begin{align}\label{eq:striestwea}
\norm{e^{itD^a}P_0f}_{L_t^q\Lr_{\rho}^pL_\omega^2}\les
\norm{f}_{L_x^2}
\end{align}
where $\widehat{P_0 f}\approx 1_{|\xi|\sim 1}\hat{f}$ (See the end
of this section for the precise definition). Here the norm
$L_t^q\Lr_{\rho}^pL_\omega^s$ for function $u(t,x)$ on $\R\times
\R^d (d\geq 2)$ is defined as follows
\[\norm{u}_{L_t^q\Lr_{\rho}^pL_\omega^s}=\brk{\int_\R\bigg[\int_0^\infty\bigg|\int_{\cir^{d-1}}
|u(t,\rho
x')|^sd\omega(x')\bigg|^{\frac{p}{s}}\rho^{d-1}d\rho\bigg]^{\frac{q}{p}}dt}^{1/q}.\]
The purpose of this paper is to study the sharp range for $(q,p)$
such that the estimate \eqref{eq:striestwea} holds. Then we apply
these estimates to the 3D Zakharov system.

Two typical examples are of particular interest, one is the wave
equation ($a=1$), the other is the Schr\"odinger equation ($a=2$).
The space time estimates which are called Strichartz estimates
address the estimates
\begin{align}\label{eq:striest}
\norm{e^{itD^a}P_0f}_{L_t^qL_x^p}\les \norm{f}_{L^2}.
\end{align}
Strichartz \cite{Str} first proved \eqref{eq:striest} for the case
$q=p$  and then the estimates were substantially extended by various
authors, e.g. \cite{GiV95,LS} for $a=1$, and \cite{GV89,Yaj} for
$a=2$. It is now well-known (see \cite{KT}) that the optimal range
for \eqref{eq:striest} is the admissible condition: if $a=1$,
\begin{align}\tag{WA}
2\leq q,p\leq \infty,\,\frac{1}{q}\leq
\frac{d-1}{2}(\frac{1}{2}-\frac{1}{p}),\, (q,p,d)\ne (2,\infty,3);
\end{align}
and if $a\ne 1$,
\begin{align}\tag{SA}
2\leq q,p\leq \infty,\,\frac{1}{q}\leq
\frac{d}{2}(\frac{1}{2}-\frac{1}{p}),\, (q,p,d)\ne (2,\infty,2).
\end{align}
However, if $d\geq 2$ and $f$ is radial, then \eqref{eq:striest}
holds for a wider range of $(q,p)$ (for example, see \cite{KlMa93,
Sogge, Shao}). For the wave equation ($a=1$), the optimal range for
\eqref{eq:striest} under radial symmetry assumption is (see
\cite{KlMa93,Sogge,Ster}, see also \cite{GuoWang})
\begin{align}\tag{RWA}
2\leq q,p\leq \infty,\,\frac{1}{q}<(d-1)(\frac{1}{2}-\frac{1}{p})
\mbox{ or } (q,p)=(\infty,2).
\end{align}
For the Schr\"odinger-type equation ($a\ne 1$), it was known that
\eqref{eq:striest} holds under radial symmetry assumption if the
following condition holds:
\begin{align}\tag{RSA}\label{eq:RSA}
2\leq q,p\leq \infty,\,\frac{1}{q}\leq
(d-\frac{1}{2})(\frac{1}{2}-\frac{1}{p}),\, (q,p)\ne
(2,\frac{4d-2}{2d-3}).
\end{align}
The range (RSA) is optimal except the radial endpoint $(q,p)=
(2,\frac{4d-2}{2d-3})$ which still remains open. (RSA) was first
obtained in \cite{GuoWang} except some endpoints improving the
results in \cite{Shao} and the remaining endpoint estimates were
later obtained in \cite{CL, Ke} independently.

There are two kinds of analogue results in the non-radial case. The
first is to consider the estimate with additional angular regularity
(See the end of this section for the definition of $H_\omega^{0,s}$)
\begin{align}\label{eq:striestang}
\norm{e^{itD^a}P_0f}_{L_t^qL_x^p}\les \norm{f}_{H_\omega^{0,s}}
\end{align}
in which some angular regularity is traded off by the extension of
admissible range, see \cite{Ster,JWY,CL}. The second one is to
consider the estimate \eqref{eq:striestwea} that we study in this
paper. From the viewpoint of application, the estimate
\eqref{eq:striestwea} works better than \eqref{eq:striestang},
because there is no loss of angular regularity. The spherically
averaged Strichartz norm was used in \cite{Tao2} to obtain the
endpoint case of Strichartz estimate for 2D Schr\"odinger (see
\cite{MaNaNaOz05} for 3D wave equation). For the wave equation
($a=1$), it was known that \eqref{eq:striestwea} also holds for
$(q,p)$ satisfying (RWA) (see \cite{SmSoWa12, JWY, CL}). If $a\ne
1$, \cite{GLNW} showed that \eqref{eq:striestwea} holds for $(q,p)$
satisfying (RWA), improving the results in \cite{JWY}. Moreover,
when $a>1$, \cite{GLNW} also showed \eqref{eq:striestwea} holds for
$(q,p)$ belonging to a wider range than (RWA), but the sharp range
is unknown.

The main result of this paper is

\begin{thm}\label{thm1}
Let $a>1$. Then \eqref{eq:striestwea} holds if $(q,p)$ satisfies

(i) $d=2$:
\begin{align}
2\leq q,p\leq \infty,\,\frac{1}{q}<
(d-\frac{1}{2})(\frac{1}{2}-\frac{1}{p})\mbox{ or }
(q,p)=(\infty,2).
\end{align}

(ii) $d\geq 3$: $(q,p)$ satisfies \eqref{eq:RSA}.
\end{thm}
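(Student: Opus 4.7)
The plan is to reduce the estimate to a one-parameter family of one-dimensional oscillatory estimates, indexed by the spherical harmonic degree $k$, and prove bounds uniform in $k$ using Bessel function asymptotics and a bilinear $TT^*$ argument. Write $f = \sum_{k \geq 0} f_k$ where $f_k$ is the projection onto the subspace of spherical harmonics of degree $k$; the operator $e^{itD^a}P_0$ preserves each mode because it commutes with the angular Laplacian. Pointwise in $(t,\rho)$ one has the Plancherel identity $\norm{e^{itD^a}P_0 f(t,\rho\cdot)}_{L^2_\omega}^2 = \sum_k \norm{e^{itD^a}P_0 f_k(t,\rho\cdot)}_{L^2_\omega}^2$, so by two applications of Minkowski's inequality (legitimate since $p,q\geq 2$) it suffices to prove
\[\norm{e^{itD^a}P_0 f_k}_{L^q_t \Lr^p_\rho L^2_\omega} \les \norm{f_k}_{L^2}\]
with constant \emph{independent} of $k$. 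For a single harmonic $f_k = F(\rho) Y(\omega)$ of degree $k$, the evolution takes the Hankel form
\[e^{itD^a}P_0 f_k(\rho\omega) = Y(\omega)\, \rho^{-(d-2)/2}\int_0^\infty e^{itr^a}J_{\nu_k}(r\rho)\chi(r) G(r) r^{d/2}\,dr,\]
where $\nu_k = k + (d-2)/2$ and $\chi$ is a cutoff near $r \sim 1$, reducing the theorem to a one-dimensional space-time estimate in $(t,\rho)$ with Bessel parameter $\nu_k$.

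Next, apply $TT^*$ and analyse the kernel
\[K_k(t-s,\rho,\rho') = \int_0^\infty e^{i(t-s)r^a} J_{\nu_k}(r\rho) J_{\nu_k}(r\rho') \chi(r)^2\, r\, dr.\]
Using the uniform bound $|J_\nu(x)| \les \min(x^\nu/\Gamma(\nu+1),x^{-1/2})$ together with the oscillatory asymptotic $J_\nu(x) = c_+ x^{-1/2} e^{ix} + c_- x^{-1/2} e^{-ix} + O(x^{-3/2})$ for $x \gtrsim \nu$ (with $\nu$-dependent phase shifts absorbed into $c_\pm$), in the main regime $\rho, \rho' \gtrsim \nu_k$ the kernel decomposes into four oscillatory integrals with phase $(t-s) r^a \pm r\rho \pm r\rho'$. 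Because $a > 1$, each such phase has a nondegenerate stationary point in $r$, so stationary phase yields a pointwise bound of the form
\[|K_k(t-s,\rho,\rho')| \les (\rho\rho')^{-1/2} \jb{t-s}^{-1/2}\]
with the spatial variables essentially localized near the corresponding group velocity. In the complementary regime (one of $\rho,\rho'$ below the turning point $\nu_k$), the pointwise smallness $|J_{\nu_k}(x)| \les (x/\nu_k)^{\nu_k}$ easily absorbs any Bessel index. Feeding this kernel bound into the bilinear form and applying Hardy-Littlewood-Sobolev along a dyadic decomposition in $|t-s|$ yields the estimate on the critical line $1/q = (d-\tfrac12)(\tfrac12 - \tfrac1p)$; the interior of the stated range is then obtained by interpolation with the trivial estimate $\norm{e^{itD^a}P_0 f}_{L^\infty_t L^2_x} \les \norm{f}_{L^2}$, and the hard endpoint $(q,p) = (2,(4d-2)/(2d-3))$ in $d \geq 3$ is recovered by a Keel-Tao style bilinear summation exploiting the curvature gain from $a>1$.

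The main technical obstacle will be keeping the constants \emph{uniform} in $k$: the Bessel asymptotic expansion degenerates at the turning point $r\rho \sim \nu_k$, and one needs sharper Airy/Barcel\'o-C\'ordoba-type bounds near this point, or a direct stationary-phase analysis in which the parameter $\nu_k$ is tracked explicitly, to guarantee that the kernel bound on $K_k$ does not grow with $k$. A secondary difficulty is achieving the hard endpoint in $d \geq 3$ without any logarithmic loss in the dyadic-$|t-s|$ summation; in $d=2$ the admissible condition is strict, so one can avoid the endpoint argument entirely and close by interpolation with the trivial $L^\infty_t L^2_x$ bound.
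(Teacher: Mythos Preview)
Your overall framework matches the paper's: spherical-harmonic decomposition, reduction to a one-dimensional Bessel operator indexed by $\nu_k$, a $TT^*$ kernel, and stationary phase. The genuine gap is in the transition region $\nu_k < r\rho \les 2\nu_k$. The asymptotic $J_\nu(x)\approx c_\pm x^{-1/2}e^{\pm ix}+O(x^{-3/2})$ you invoke for ``$x\gtrsim\nu$'' is valid only for $x\gg\nu$; near the turning point the correct expansion (the paper's Lemma~\ref{lem:Bessel}) has amplitude $(x^2-\nu^2)^{-1/4}$ and phase $\theta(x)=(x^2-\nu^2)^{1/2}-\nu\arccos(\nu/x)-\pi/4$. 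Since $(x^2-\nu^2)^{-1/4}\gg x^{-1/2}$ in this region, your claimed kernel bound $|K_k|\les(\rho\rho')^{-1/2}\jb{t-s}^{-1/2}$ is simply false there, and the exponential smallness $(x/\nu)^\nu$ you cite for the complementary regime does not cover $\nu<x<2\nu$. You acknowledge the turning point as a uniformity issue, but it is more than that.

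Even after inserting the correct Bessel expansion, the linear-phase stationary phase you sketch (phase $(t-s)r^a\pm r\rho\pm r\rho'$, second derivative $\sim|t-s|$) does not reach \eqref{eq:RSA}. The mechanism that upgrades the range from (RWA) to (RSA) is precisely the \emph{curvature} $\theta''(x)=\nu^2 x^{-2}(x^2-\nu^2)^{-1/2}\neq 0$: in the paper's $TT^*$ phase $\phi_2=t\rho^a-\theta(r\rho)+\theta(r'\rho)$, the key observation (proof of Lemma~\ref{lem:SR3imp}) is that when $|\p_\rho\phi_2|\ll|t|$ one has $|\p_\rho^2\phi_2|\ges |t|\cdot R/2^k$ with $2^k\sim r\rho-\nu$, and this extra factor $R/2^k$ is exactly the contribution of $\theta''$. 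Without it one obtains at best $\|S_R\|_{L^2\to L^2_tL^\infty_r}\les R^{-1/6}$ rather than the required $R^{-1/4}$, and the resulting range is strictly smaller than \eqref{eq:RSA}. Finally, your endpoint discussion misidentifies the source of the logarithmic loss: it occurs in the \emph{spatial} dyadic sum $\sum_R$, not in a time-dyadic sum, and the paper resolves it via almost-orthogonality between different spatial scales $R\gg R'$ (Lemma~\ref{lem:ortho}) rather than a Keel--Tao time argument.
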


By the theorem above, we see that the optimal range for
\eqref{eq:striestwea} is obtained except the endpoint line
$\frac{1}{q}=(d-\frac{1}{2})(\frac{1}{2}-\frac{1}{p})$ for $d=2$,
and the endpoint $(q,p)=(2,\frac{4d-2}{2d-3})$ for $d\geq 3$. The
basic ideas of proving Theorem \ref{thm1} are the same as in
\cite{GLNW}, namely to do the space dyadically localized estimates
by exploiting the decay and oscillatory effect of a family of Bessel
functions uniformly. The key differences in this paper are: 1) we
prove better uniform estimates for Bessel functions (indeed, we give
a uniform expansion) in the transitive region; 2) we treat the
oscillatory integral operator related to the Bessel function in
finer scale so that we can catch more subtle oscillatory effects; 3)
To get the endpoint for $d\geq 3$, we exploit some ``almost
orthogonality" to overcome some logarithmic summation difficulty.

Besides its own interest, \eqref{eq:striestwea} plays important
roles in the nonlinear problems, e.g. in \cite{GLNW} where the
authors proved scattering for the 3D Zakharov system for small data
in the energy space with one additional angular regularity. In this
paper, we use the new estimates in Theorem \ref{thm1} to improve the
angular regularity. Consider the 3D Zakharov system:
\begin{align}\label{eq:Zak}
 \CAS{ i\dot u - \De u = nu,\\
   \ddot n/\al^2 - \De n = -\De|u|^2,}
\end{align}
with the initial data
\begin{align}
u(0,x)=u_0,\, n(0,x)=n_0,\,\dot n(0,x)=n_1,
\end{align}
where $(u,n)(t,x):\R^{1+3}\to\C\times\R$, and $\al>0$ denotes the
ion sound speed. The system was introduced by Zakharov \cite{Zak} as
a mathematical model for the Langmuir turbulence in unmagnetized
ionized plasma. It preserves $\|u(t)\|_{L^2_x}$ and the energy \EQ{
 E=\int_{\R^3}|\na u|^2+\frac{|D^{-1}\dot n|^2/\al^2+|n|^2}{2}-n|u|^2 dx}
The natural energy space for initial data is \EQ{\label{eq:indata}
 (u_0,n_0,n_1)\in H^1(\R^3)\times L^2(\R^3)\times \dot H^{-1}(\R^3).}
The Zakharov system has been extensively studied, see
\cite{BoCo,GTV,KPV,MN2,GTV,BHHT,GTV,BeHe,BoCo,Takaoka,Kishi,SW,OT,MN,Merle,Shimo,GV,OT2,GN,GNW},
and the introduction of \cite{GLNW}.

Since global well-posedness for \eqref{eq:Zak} with small data in
the energy space was proved by Bourgain-Colliander \cite{BoCo}, the
long time behavior of the solutions has been a very interesting
problem. For this problem, the first result was obtained in
\cite{GN} that scattering holds for small energy data under radial
symmetry assumption. Later, global dynamics below ground state in
the radial case was obtained in \cite{GNW}. In the non-radial case,
for data with sufficient regularity and decay, and with suitable
small norm, scattering was obtained in \cite{HPS}. In \cite{GLNW},
the authors proved scattering for small data in the energy space
with one order angular regularity. In this paper we prove

\begin{thm}\label{thm2}
Let $s>3/4$. Assume $\norm{(u_0,n_0,n_1)}_{H_\omega^{1,s}\times
H_\omega^{0,s}\times \dot H_\omega^{-1,s}}=\e$ for $\e>0$
sufficiently small. Then the global solution $(u,n)$ to
\eqref{eq:Zak} belongs to $C_t^0H_\omega^{1,s}\times
C_t^0H_\omega^{0,s}\cap C_t^1\dot{H}_\omega^{-1,s}$, and scatters in
this space.
\end{thm}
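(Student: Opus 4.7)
My plan is to follow the resolution-space fixed-point scheme of \cite{GLNW} almost verbatim, tracking angular regularity $H^{0,s}_\omega$ through every step, and to redo the bilinear estimates using Theorem \ref{thm1} as the new ingredient. Specifically, I would replace each $L^2_\omega$ factor in the $U^p/V^p$-type atomic solution spaces $S^k$ and dual spaces $N^k$ adapted to the Schr\"odinger and wave propagators in \cite{GLNW} by $H^{0,s}_\omega$, producing weighted spaces $S^k_s$, $N^k_s$. Since the spherical Laplacian $\Delta_{\cir^2}$ commutes with the free Schr\"odinger and wave flows, with the radial Littlewood--Paley projectors, and with the dyadic modulation cutoffs used in \cite{GLNW}, this reweighting is free at the linear level, and the entire problem collapses to reproving the bilinear estimates
\[
\|P_{N_0}(nu)\|_{N^1_s}\lesssim \|n\|_{S^0_s}\|u\|_{S^1_s},\qquad \|P_{N_0}\Delta|u|^2\|_{N^{-1}_s}\lesssim \|u\|_{S^1_s}^2,
\]
at the relaxed threshold $s>3/4$ instead of $s=1$.

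For the bilinear estimates, after dyadic decomposition $N_0,N_1,N_2$ of the output and two inputs, the only genuinely new cases are the high$\times$high interactions, as the high$\times$low pieces benefit from the low-frequency factor automatically. In these cases I would apply H\"older in the angular variable $\omega$, using the Sobolev embeddings on $\cir^2$ that become available for $s>1/2$ and sharpen as $s\uparrow 1$, to convert the $H^{0,s}_\omega$ inputs into $L^r_\omega$ factors and thereby reduce the problem to a radial/time product of spherically averaged Strichartz norms $L^{q_i}_t\Lr^{p_i}_\rho L^2_\omega$. It is exactly here that Theorem \ref{thm1} enters: exponents $(q_i,p_i)$ outside the classical admissible range (SA) but inside the wider range of Theorem \ref{thm1}(ii) can be chosen so that H\"older in $(t,\rho)$ and the angular Sobolev cost together balance out precisely at $s=3/4^+$. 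The logarithmic summation over low output frequencies that appears along the new endpoint line in $d=3$ is handled by the ``almost orthogonality'' mechanism already present in the proof of Theorem \ref{thm1}.

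The hardest case, and the place where the threshold $3/4$ is forced, will be the resonant high$\times$high$\to$low interaction in $\Delta|u|^2$ that drives the wave component, feeding back through high$\times$low$\to$high coupling into the Schr\"odinger nonlinearity $nu$; both the Schr\"odinger and wave dispersions must be exploited simultaneously here. Following \cite{GLNW} I would first perform a time-normal-form transformation to remove the nonresonant portion of this interaction, trading a time derivative for an algebraic gain in the resonance function, and then estimate the resonant remainder by a bilinear Strichartz refinement in which the angular H\"older step costs exactly $s>3/4$. Once all bilinear estimates are in place, contraction on a small ball in $S^1_s\times S^0_s$ yields a unique global solution, and the scattering statement follows because the $N^k_s$-norm of the nonlinearity controls the defect from the free flow in $H^{1,s}_\omega$, $H^{0,s}_\omega$ and $\dot H^{-1,s}_\omega$ as $t\to\pm\infty$, just as in \cite{GLNW}.
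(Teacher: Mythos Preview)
Your overall architecture (normal form, bilinear estimates, reweighting the solution spaces by $H^{0,s}_\omega$, contraction) is in the right spirit, but the proposal diverges from the paper in two significant ways, one of which is a genuine gap.

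First, on the function space framework: the paper does \emph{not} run a $U^p/V^p$ atomic-space scheme. Following \cite{GN,GLNW} it works directly with Strichartz--Besov norms after the normal form reduction, namely
\[
u\in \jb{D}^{-1}\bigl(L^\infty_tH^{0,s}_\omega\cap L^2_t\dot B^{1/4+\e,s}_{(q(\e),\gamma(\e)-),\omega}\cap L^2_t\dot B^{0,s}_{6,\omega}\bigr),\qquad
N\in L^\infty_tH^{0,s}_\omega\cap L^2_t\dot B^{-1/4-\e,s}_{(q(-\e),2+),\omega},
\]
and estimates the transformed nonlinearities in $L^1_tH^{1,s}_\omega$ (or dual Strichartz) rather than in an $N^k$-type space. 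This is a different route; your $U^p/V^p$ variant is not obviously wrong, but it is not what is done here, and it would require you to redo the multilinear analysis of \cite{GN} in that setting from scratch.

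Second, and more importantly, your handling of the fractional angular derivative on products is incomplete. You propose to ``apply H\"older in $\omega$ and Sobolev embedding on $\cir^2$'' to turn $H^{0,s}_\omega$ inputs into $L^r_\omega$ factors. That step goes in the wrong direction: the \emph{output} must be measured in $H^{0,s}_\omega$, so one has to distribute $D_\omega^s$ across a product, and for noninteger $s$ there is no pointwise Leibniz rule on $\cir^2$ to invoke. This is precisely the new difficulty when passing from $s=1$ in \cite{GLNW} to $s>3/4$, and the paper resolves it by a device you do not mention: for each bilinear multiplier $T_m$ with rotation-invariant symbol (all the multipliers arising from the normal form have this property), one lifts functions on $\R^3$ to functions on $SO(3)$ via $A\mapsto f(Ax)$, uses that $D_\omega^s$ intertwines with the corresponding $D_A^s$ on $SO(3)$, and then applies the fractional Leibniz rule on the compact Lie group $SO(3)$ due to Coulhon--Russ--Tardivel \cite{CRT}. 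This yields (Lemma~\ref{lem:bilinear})
\[
\|T_m(P_{k_1}f,P_{k_2}g)\|_{\Lr_r^p\Hl^s_q}\les \|f\|_{\Lr_r^{p_1}\Hl^s_{q_1}}\|g\|_{\Lr_r^{p_2}L^{q_2}_\omega}+\|f\|_{\Lr_r^{p_1}L^{\tilde q_2}_\omega}\|g\|_{\Lr_r^{p_2}\Hl^s_{\tilde q_1}},
\]
after which the $L^{q_2}_\omega$, $L^{\tilde q_2}_\omega$ factors are controlled by the spherically averaged Strichartz estimates of Theorem~\ref{thm1} and Corollary~\ref{cor:str}. Without this $SO(3)$ step your bilinear estimates do not close, and your account of where the threshold $s>3/4$ arises (a single ``resonant high$\times$high$\to$low'' piece costing an angular H\"older factor) does not match the actual mechanism.
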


Theorem \ref{thm2} was proved in \cite{GLNW} for $s=1$. We improve
the angular regularity to $s>3/4$ by using the new estimates of
Theorem \ref{thm1}. To deal with the fractional derivative on the
sphere, we transfer it to the fractional derivative on $SO(3)$ (see
the appendix). We remark that $s>3/4$ reaches a limitation of our
method. To remove the angular regularity, new ideas should be
developed.

\subsubsection*{Notations} Finally we close this section by listing the notation.

\noindent $\bullet$ We denote $\N=\Z\cap [0,\infty)$, $\N^*=\Z\cap
(0,\infty)$.

\noindent $\bullet$  $\ft(f)$ and $\widehat{f}$  denote the Fourier
transform of $f$, $\hat f(\xi)=\int_{\R^n}e^{-ix\cdot\xi}f(x)dx$.
For $a\geq 1$, $S_a(t)=e^{itD^a}=\ft^{-1}e^{it|\xi|^a}\ft$.

\noindent $\bullet$ $\eta: \R\to [0, 1]$ is an even, non-negative
smooth function which is supported in $\{\xi:|\xi|\leq 8/5\}$ and
$\eta\equiv 1$ for $|\xi|\leq 5/4$. For $k\in \Z$,
$\chi_k(\xi)=\eta(\xi/2^k)-\eta(\xi/2^{k-1})$ and $\chi_{\leq
k}(\xi)=\eta(\xi/2^k)$.

\noindent $\bullet$ $P_k, P_{\leq k}$ are defined on $L^2(\R^d)$ by
$\widehat{P_ku}(\xi)=\chi_k(|\xi|)\widehat{u}(\xi),\,\widehat{P_{\leq
k}u}(\xi)=\chi_{\leq k}(|\xi|)\widehat{u}(\xi)$.
\smallskip

\noindent $\bullet$   $\Des$ denotes the Laplace-Beltrami operator
on the unite sphere $\cir^{d-1}$ endowed with the standard metric
$g$ measure $d\omega$ and $\Lambda_\omega=\sqrt{1-\Delta_\omega}$.
Denote $L_\omega^p=L_\omega^p(\cir^{d-1})=L^p(\cir^{d-1}:d\omega)$,
$\Hl_p^s=\Hl_p^s(\cir^{d-1})=\Lambda_\omega ^{-s}L_\omega^p$.

\smallskip

\noindent $\bullet$ $L^p(\R^d)$ denotes the usual Lebesgue space,
and $\Lr^p(\R^+)=L^p(\R^+:r^{d-1}dr)$.

\noindent $\bullet$  $\Lr_r^pL_\omega^q $ and $\Lr_r^p\Hl^s_q$ are
Banach spaces defined  by the  following norms
\[\norm{f}_{\Lr_{r}^pL_\omega^q}=\big\|{\norm{f(r\omega)}_{L_\omega^q}}\big\|_{\Lr_{r}^p},\
\norm{f}_{\Lr_{r}^p\Hl^s_q}=\big\|{\norm{f(r\omega)}_{\Hl^s_q}}\big\|_{\Lr_{r}^p}.\]

\smallskip

\noindent $\bullet$  $H^s_p$, $\dot{H}_p^s$ ($B^s_{p,q}$,
$\dot{B}^s_{p,q}$) are the usual Sobolev (Besov) spaces on $\R^d$.

\noindent $\bullet$  $\dot{B}^s_{(p,q),r}$ denotes the Besov-type
space given  by the  norm
\[\norm{f}_{\dot{B}^s_{(p,q),r}}=(\sum_{k\in \Z}2^{ksr}\norm{P_kf}_{\Lr_r^pL_\omega^q}^r)^{1/r}.\]

\noindent $\bullet$  $0\leq \alpha\leq 1$, $H^{s,\alpha}_{p,\omega}$
is the space with the norm
$\norm{f}_{H^{s,\alpha}_{p,\omega}}=\norm{\Lambda_\omega^{\alpha}f}_{H^s_p}$,
and the spaces $\dot{H}^{s,\alpha}_{p,\omega}$,
$B^{s,\alpha}_{p,q,\omega}$, $\dot{B}^{s,\alpha}_{p,q,\omega}$, and
$\dot{B}^{s,\alpha}_{(p,q),r,\omega}$ are defined similarly.

\noindent $\bullet$  For simplicity, we denote
$H^{s,\alpha}_{\omega}=H^{s,\alpha}_{2,\omega}$,
$\dot{H}^{s,\alpha}_{\omega}=\dot{H}^{s,\alpha}_{2,\omega}$,
$B^{s,\alpha}_{p,\omega}=B^{s,\alpha}_{p,2,\omega}$,
$\dot{B}^{s,\alpha}_{p,\omega}=\dot{B}^{s,\alpha}_{p,2,\omega}$,
$\dot{B}^{s,\alpha}_{(p,q),\omega}=\dot{B}^{s,\alpha}_{(p,q),2,\omega}$.

\smallskip

\noindent $\bullet$ Let $X$ be a Banach space on $\R^d$. $L_t^qX$
denotes the space-time space on $\R\times \R^d$ with the norm
$\norm{u}_{L_t^qX}=\big\|\norm{u(t,\cdot)}_X\big\|_{L_t^q}$.

\section{Uniform estimates for Bessel functions}

To prove Theorem \ref{thm1}, we need to deal with a family of Bessel
functions which are defined by
\begin{align*}
J_\nu(r)=\frac{(r/2)^\nu}{\Gamma(\nu+1/2)\pi^{1/2}}
\int_{-1}^1e^{irt}(1-t^2)^{\nu-1/2}dt, \ \ \nu>-1/2.
\end{align*}
In this section, we study the uniform properties for the Bessel
functions $J_\nu(r)$ with respect to the order $\nu$ by some
dedicate stationary phase analysis. Consider the oscillatory
integral
\[I(\lambda)=\int_{-\infty}^\infty e^{i\lambda \phi(x)}a(x)dx, \quad \lambda\in \R\]
where $\phi\in C^\infty(\R)$ and $a\in C_0^\infty(\R)$. We are
interested in the behavior of $I(\lambda)$ as $\lambda\to \infty$.
In order to apply it to the estimate for Bessel function, we need to
track the dependence of $\phi$, $a$, since we allow $\phi$ depends
on $\lambda$. There is a classical useful Van der Corput lemma (see
\cite{Stein2}):

\begin{lem}[Van der Corput]\label{lem:staph}
Suppose $\phi$ is real-valued and smooth in $(a,b)$, and that
$|\phi^{(k)}(x)|\geq 1$ for all $x\in (a,b)$. Then
\[\aabs{\int_a^b e^{i\lambda \phi(x)}\psi(x)dx}\leq c_k \lambda^{-1/k}\bigg[|\psi(b)|+\int_a^b|\psi'(x)|dx\bigg]\]
holds when (i) $k\geq 2$, or (ii) $k=1$ and $\phi'(x)$ is monotonic.
Here $c_k$ is a constant depending only on $k$.
\end{lem}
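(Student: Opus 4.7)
The plan is to argue by induction on $k$, using a single integration by parts in the base case $k=1$ and reducing each higher case to the previous one by localizing away from the unique possible zero of $\phi^{(k-1)}$.

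For the base case $k=1$, I would integrate by parts once, writing $e^{i\lambda\phi}=(i\lambda\phi')^{-1}\tfrac{d}{dx}e^{i\lambda\phi}$, which is legal because $|\phi'|\geq 1$, and observing that $1/\phi'$ is smooth and monotonic (since $\phi'$ is monotonic and does not vanish) with $|1/\phi'|\leq 1$. This produces
\[\int_a^b e^{i\lambda\phi(x)}\psi(x)\,dx=\frac{1}{i\lambda}\left[\frac{\psi\, e^{i\lambda\phi}}{\phi'}\right]_a^b-\frac{1}{i\lambda}\int_a^b e^{i\lambda\phi}\left(\frac{\psi}{\phi'}\right)'dx.\]
Expanding $(\psi/\phi')'=\psi'/\phi'-\psi\phi''/(\phi')^2$, the monotonicity of $1/\phi'$ gives $\int_a^b|(1/\phi')'|\,dx=|1/\phi'(b)-1/\phi'(a)|\leq 2$, so the second summand contributes at most $2\|\psi\|_\infty/\lambda$, while the first contributes $\lambda^{-1}\int|\psi'|$. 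Using $|\psi(a)|\leq|\psi(b)|+\int|\psi'|$ and $\|\psi\|_\infty\leq|\psi(b)|+\int|\psi'|$, every term collapses into the desired form $c_1\lambda^{-1}[|\psi(b)|+\int|\psi'|]$.

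For the inductive step $k\geq 2$, assume the lemma at level $k-1$. The condition $|\phi^{(k)}|\geq 1$ makes $\phi^{(k-1)}$ strictly monotonic, hence it vanishes at most at one point $x_0\in(a,b)$; by the mean value theorem $|\phi^{(k-1)}(x)|\geq|x-x_0|$. Fix a parameter $\delta>0$ and split $(a,b)$ into the $\delta$-neighborhood $I_{\mathrm{near}}=(x_0-\delta,x_0+\delta)\cap(a,b)$ and its complement $I_{\mathrm{far}}$ (one or two subintervals). Trivially, the contribution from $I_{\mathrm{near}}$ is bounded by $2\delta\|\psi\|_\infty\leq 2\delta[|\psi(b)|+\int|\psi'|]$. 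On each component of $I_{\mathrm{far}}$, the rescaled phase $\phi/\delta$ satisfies $|(\phi/\delta)^{(k-1)}|\geq 1$, so the inductive hypothesis applied with $\lambda$ replaced by $\lambda\delta$ yields the bound $c_{k-1}(\lambda\delta)^{-1/(k-1)}[|\psi(b)|+\int|\psi'|]$. Optimizing by $\delta=\lambda^{-1/k}$ balances $(\lambda\delta)^{-1/(k-1)}\sim\delta$ and produces $c_k\lambda^{-1/k}[|\psi(b)|+\int|\psi'|]$, with $c_k$ depending only on $k$.

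The main technical care will be in the base case, where the asymmetric right-hand side $|\psi(b)|+\int|\psi'|$ of the statement forces us to absorb $|\psi(a)|$ and $\|\psi\|_\infty$ through the fundamental theorem of calculus rather than treating them as independent. A secondary point in the inductive step is that one must separately handle the case where $\phi^{(k-1)}$ does not vanish on $(a,b)$ at all (in which case $I_{\mathrm{near}}$ is empty and the inductive hypothesis applies directly) and the case where $I_{\mathrm{far}}$ consists of two pieces, but in both situations the bounds assemble in the same way and only inflate the numerical constant $c_k$.
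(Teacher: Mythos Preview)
Your argument is correct and is exactly the standard induction-on-$k$ proof (integration by parts for $k=1$ using the monotonicity of $1/\phi'$, then the near/far splitting around the unique possible zero of $\phi^{(k-1)}$ with the optimization $\delta=\lambda^{-1/k}$). The paper itself does not prove this lemma at all; it simply quotes it as the classical Van der Corput lemma with a reference to Stein's \emph{Harmonic Analysis}, and your write-up is essentially that textbook proof.
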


By the Van der Corput lemma and the Schl\"{a}fli's integral
representation of Bessel function (see p. 176, \cite{Bess}):
\begin{align}\label{eq:Besselint}
J_\nu(r)=&\frac{1}{2\pi}\int_{-\pi}^\pi e^{i(r\sin x-\nu
x)}dx-\frac{\sin(\nu\pi)}{\pi}\int_0^\infty
e^{-\nu\tau-r\sinh \tau}d\tau \nonumber\\
:=&J_\nu^M(r)-J_\nu^E(r),
\end{align}
we can prove

\begin{lem}\label{lem:Bessdecay} Assume $r,\nu>10$. Then we have
\begin{align}
|J_\nu(r)|+|J_\nu'(r)|\leq& Cr^{-1/3}(1+r^{-1/3}|r-\nu|)^{-1/4}.
\end{align}
\end{lem}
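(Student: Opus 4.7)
My approach is to use the Schl\"afli representation \eqref{eq:Besselint} to split $J_\nu = J_\nu^M - J_\nu^E$, then bound each piece uniformly via Lemma \ref{lem:staph}. The exponential piece is straightforward: since $\sinh\tau \geq \tau$, the integrand is dominated by $e^{-(\nu+r)\tau}$, so $|J_\nu^E(r)| \leq 1/(\nu+r)$; direct case-checking confirms this is controlled by the claimed RHS $r^{-1/3}(1+r^{-1/3}|r-\nu|)^{-1/4}$ in every regime (the only slightly nontrivial sub-case is $\nu \gg r$, where $1/\nu \lesssim r^{-1/4}(\nu-r)^{-1/4}$ follows from AM-GM).

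For $J_\nu^M$, the phase $\phi(x) = r\sin x - \nu x$ satisfies $\phi'(x) = r(\cos x - k)$ (with $k = \nu/r$), $\phi''(x) = -r\sin x$, and $\phi'''(x) = -r\cos x$. By the odd symmetry $\phi(-x) = -\phi(x)$, I restrict to $[0,\pi]$, on which $\phi'$ is monotone. Set $M := \sqrt{|r^2-\nu^2|}$, and split into three regimes.

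\emph{Shadow regime} $\nu - r \geq r^{1/3}$: no critical point, and $|\phi'| \geq \nu - r$ throughout. Lemma \ref{lem:staph} with $k=1$ yields $\lesssim 1/(\nu-r)$, matching the target since $(\nu-r)^3 \geq r$ here. \emph{Stationary phase regime} $r - \nu \geq r^{1/3}$: a critical point sits at $x_c = \arccos k$, with $|\phi''(x_c)| = M$. Localize at scale $\delta \sim M^{-1/2}$: on the localized piece, Taylor expansion gives $|\phi''| \gtrsim M$ so Lemma \ref{lem:staph} with $k=2$ gives $\lesssim M^{-1/2}$; off the localization, $|\phi'(x)| \gtrsim M|x-x_c|$, and a dyadic decomposition in $|x-x_c|$ together with $k=1$ Van der Corput also produces $\lesssim M^{-1/2}$. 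Since $M \sim \sqrt{r(r-\nu)}$ here, this matches $r^{-1/4}(r-\nu)^{-1/4}$. \emph{Transition regime} $|r-\nu| \leq r^{1/3}$: on $[0,\pi/3]$, $|\phi'''| \geq r/2$, so Lemma \ref{lem:staph} with $k=3$ gives $\lesssim r^{-1/3}$; on $[\pi/3,\pi]$, $|\phi'| \gtrsim r$ (since $k$ is close to $1$) gives the negligible contribution $\lesssim 1/r$.

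The bound on $J_\nu'(r)$ follows from differentiating Schl\"afli, which merely inserts a bounded factor $\sin x$ (resp.\ $\sinh\tau$) into each integrand; the same Van der Corput analysis applies with the same bounds. The main obstacle is the stationary phase regime: one must verify that $\delta = M^{-1/2}$ is simultaneously large enough for Taylor to guarantee $|\phi''| \gtrsim M$ on the localization (equivalent to $M \gtrsim \nu^{2/3}$, which holds precisely when $r-\nu \geq r^{1/3}$), and small enough that the dyadic sum off the localization converges without logarithmic loss. Stitching the stationary phase and transition regimes at the boundary $|r-\nu| \sim r^{1/3}$ requires the two bounds $r^{-1/4}(r-\nu)^{-1/4}$ and $r^{-1/3}$ to agree up to constants at this scale, which they indeed do.
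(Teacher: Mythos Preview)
Your proof is correct and follows essentially the same strategy as the paper: split via Schl\"afli into $J_\nu^M$ and $J_\nu^E$, dispose of the latter trivially, and handle $J_\nu^M$ by Van der Corput according to the size and sign of $r-\nu$ relative to $r^{1/3}$ (the paper refers to \cite{GLNW} for the non-transition regime $|r-\nu|>r^{1/3}$ and uses $k=2,3$ rather than your $k=1,3$ split in the transition regime, but these are cosmetic differences). One small slip: the inserted factor $\sinh\tau$ in $\partial_r J_\nu^E$ is not bounded, but since $\sinh\tau\, e^{-r\sinh\tau/2}\lesssim r^{-1}$ one still gets $|\partial_r J_\nu^E(r)|\lesssim r^{-1}(\nu+r)^{-1}$, which suffices.
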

\begin{proof}
It's obvious that $|J_\nu^E(r)|\les (r+\nu)^{-1}$, then it remains
to consider $J_\nu^M$. We may assume $\nu\les r$. If
$|r-\nu|>r^{1/3}$, see the proof of Lemma 3.2 in \cite{GLNW}. If
$|r-\nu|\leq r^{1/3}$, it follows from Lemma \ref{lem:staph} with
$k=2,3$.
\end{proof}

\begin{lem}\label{lem:Bessdecay2}
Assume $\nu\in \N$, $\nu>r+\lambda$, and
$\lambda>r^{\frac{1}{3}+\e}$ for some $\e>0$. Then for any $K\in \N$
\begin{align}
|J_\nu(r)|+|J_\nu'(r)|\leq C_{K,\e}r^{-K\e}.
\end{align}
\end{lem}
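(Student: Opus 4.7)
The plan is to exploit the integrality of $\nu$ twice over. First, $\sin(\nu\pi) = 0$ makes the second summand $J_\nu^E(r)$ in the Schl\"afli representation \eqref{eq:Besselint} vanish identically, so
\[
J_\nu(r) = J_\nu^M(r) = \frac{1}{2\pi}\int_{-\pi}^\pi e^{i\phi(x)}\,dx,\quad \phi(x) = r\sin x - \nu x.
\]
This phase has no real critical point because $\phi'(x) = r\cos x - \nu \leq -\lambda$, but naive Van der Corput gives only $\lambda^{-1}$, which is much too weak; and iterated integration by parts is also inconclusive because $|\phi''| \leq r$ is not small relative to $|\phi'|^2 \geq \lambda^2$ in the regime $\lambda \sim r^{1/3+\e}$. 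So a different mechanism is needed.

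The key second use of integrality is that $e^{i\phi(x)}$ is $2\pi$-periodic, so $e^{i\phi(\pi + it)} = e^{i\phi(-\pi + it)}$ for every $t \in \R$. Applying Cauchy's theorem to the rectangle with vertices $\pm\pi$ and $\pm\pi - i\alpha$ makes the two vertical contributions cancel, giving
\[
J_\nu(r) = \frac{1}{2\pi}\int_{-\pi}^\pi e^{i\phi(y - i\alpha)}\,dy\quad\text{for every }\alpha \geq 0.
\]
Using $\sin(y - i\alpha) = \sin y\cosh\alpha - i\cos y\sinh\alpha$ one computes $|e^{i\phi(y - i\alpha)}| = e^{r\cos(y)\sinh\alpha - \nu\alpha}$, which is maximized at $y = 0$. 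Optimizing in $\alpha$ with $\alpha^* = \operatorname{arccosh}(\nu/r)$ (well-defined since $\nu > r$) yields the sharp pointwise bound
\[
|J_\nu(r)| \leq e^{-h(\nu, r)},\quad h(\nu, r) := \nu\operatorname{arccosh}(\nu/r) - \sqrt{\nu^2 - r^2}.
\]

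The last step is a lower bound for $h(\nu, r)$. Setting $t = \nu/r$ and $\tilde h(t) := t\operatorname{arccosh}(t) - \sqrt{t^2 - 1}$, one checks $\tilde h(1) = 0$ and $\tilde h'(t) = \operatorname{arccosh}(t)$, so Taylor expansion at $t = 1$ produces $\tilde h(t) \sim \tfrac{2\sqrt 2}{3}(t-1)^{3/2}$. Hence for $\mu := \nu - r$ in the range $\lambda \leq \mu \leq r$ one has $h(\nu, r) = r\tilde h(\nu/r) \gtrsim \mu^{3/2}/\sqrt r \geq r^{3\e/2}$; and for $\mu > r$ the bound $\alpha^* \geq \operatorname{arccosh}(2) > 1$ combined with $\sqrt{\nu^2 - r^2} \leq \nu$ gives $h \geq \nu(\alpha^* - 1) \gtrsim \nu \geq r \geq r^{3\e/2}$. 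In every regime $|J_\nu(r)| \leq e^{-c r^{3\e/2}} \leq C_{K,\e}\, r^{-K\e}$ for any $K$. For $|J_\nu'(r)|$ one applies the recurrence $J_\nu'(r) = (J_{\nu - 1}(r) - J_{\nu + 1}(r))/2$ and reruns the argument on $\nu \pm 1$, which remain integers satisfying the hypothesis with a slightly smaller $\e'$ (harmless for large $r$); alternatively one differentiates the shifted integral in $r$, picking up only a polynomial factor of order $\nu/r$ which is dwarfed by $e^{-h}$. The main obstacle is uniformity of the lower bound on $h$ across both the transitive regime $\mu \sim r^{1/3+\e}$, where the $\mu^{3/2}/\sqrt r$ asymptotics just barely kick in, and the far regime $\mu \gg r$; the contour shift itself is automatic once $2\pi$-periodicity is in hand.
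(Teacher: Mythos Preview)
Your proof is correct and takes a genuinely different route from the paper's. The paper stays on the real axis and uses iterated integration by parts: writing $J_\nu(r)=\frac{1}{2\pi}\int_{-\pi}^\pi e^{i\phi}\,D_\phi^K(1)\,dx$ with $D_\phi f=-\p_x(f/\phi')$, it proves $|D_\phi^K(1)|\leq C_{K,\e}r^{-K\e}$ by induction on $K$. What makes this succeed---contrary to your remark that integration by parts is ``inconclusive''---is the sharp pointwise bound $|r\sin x/\phi'(x)|\leq r/\sqrt{\nu^2-r^2}\les r^{1/3}$, so that each application of $D_\phi$ gains a factor $\les r^{1/3}\cdot|\phi'|^{-1}\leq r^{-\e}$ rather than the crude $r/\lambda^2$ you had in mind. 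Your contour shift (essentially the classical Debye bound) is cleaner and delivers the much stronger exponential decay $e^{-cr^{3\e/2}}$ in one stroke; the paper's real-variable argument is more elementary but requires the bookkeeping of the induction on $D_\phi^K(1)$. Both proofs exploit $\nu\in\Z$ at the same structural point: the paper needs $e^{i\phi(\pi)}=e^{i\phi(-\pi)}$ so that the boundary terms in each integration by parts cancel, while you need the same $2\pi$-periodicity so that the two vertical sides of the rectangle cancel.
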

\begin{proof}
We only need to estimate $J_\nu(r)$, since $2
J_\nu'(r)=J_{\nu-1}(r)-J_{\nu+1}(r)$ (see p. 45 (2) in \cite{Bess}).
If $\nu\geq 2r$, then integrating by part $K$ times we can get
$|J_\nu(r)|\les \nu^{-K}\les r^{-K}$. Now we assume $\nu\leq 2r$.
Let $\phi(x)=r\sin x-\nu x$. Then $\phi'(x)=r\cos x-\nu$. By the
assumption $|\phi'(x)|>r^{\frac{1}{3}+\e}$. Define the operator
$D_\phi$ by
\[D_\phi(f)=-\p_x(\phi'(x)^{-1}f).\]
Then \[J_\nu(r)=\frac{1}{2\pi}\int_{-\pi}^\pi
\phi'(x)^{-1}\p_x[e^{i(r\sin x-\nu
x)}]dx=\frac{1}{2\pi}\int_{-\pi}^\pi e^{i(r\sin x-\nu
x)}D_\phi^K(1)dx.\] It suffices to show
\begin{align}\label{eq:Jbound}
|D_\phi^K(1)|\leq C_{K,\e} r^{-K\e}.
\end{align}
We prove \eqref{eq:Jbound} by induction on $K$. For $K=0$, the bound
is trivial. Now we consider $K=1$. We have
\[\p_x(\phi'(x)^{-1})=\phi'(x)^{-1}\frac{r\sin x}{\phi'(x)}.\]
Let $g(x)=\frac{r\sin x}{\phi'(x)}$. By calculus we see
\begin{align}\label{eq:gbound}
|g(x)|\leq |g(\arccos \frac{r}{\nu})|=\frac{r}{\sqrt{\nu^2-r^2}}\les
r^{\frac{1}{3}}.
\end{align}
Thus by assumption the case $K=1$ is proved. Now we assume
\eqref{eq:Jbound} holds for $K$ by using the bound
\eqref{eq:gbound}. If $g$ is not a factor of $D_\phi^{K}(1)$, then
\[|D_\phi^{K+1}(1)|\sim |D_\phi^{K}(1)\phi'(x)^{-1}g(x)|\les r^{-(K+1)\e}.\]
If $g$ is a factor of $D_\phi^{K}(1)$, then
\[D_\phi^{K+1}(1)=CD_\phi^{K}(1)\phi'(x)^{-1}g(x)+\tilde C G\]
where $G$ is given by $D_\phi^K(1)$ but with one factor $g$ replaced
by $\phi'(x)^{-1}\frac{r\cos x}{\phi'(x)}$. The letter has better
bound $r^{\frac{1}{3}+\e}$ than $g$. So by induction $|G|\les
r^{-(K+1)\e}$. We complete the proof of the lemma.
\end{proof}

We will not only use the decay of $I(\lambda)$, but also the
oscillation of $I(\lambda)$. Following the argument of section 3.4
in \cite{Zwor} (see also \cite{Stein2}), we prove

\begin{lem}\label{lem:Ilambda}
Assume $a$ is supported in $\{x\in \R: |x|<1\}$, and $\phi$ satisfies\\
(1) $\phi(0)=\phi'(0)=0$; \\
(2) $\phi''(x)\sim 1$, if $|x|\leq 1$; \\
(3) $|\phi^{(k)}(x)|\les 1$ if $|x|\leq 1$, $1\leq k\leq 6$.\\
Then
\[I(\lambda)=(2\pi)^{1/2}\lambda^{-1/2}\frac{a(0)e^{i\pi/4}}{[\phi''(0)]^{1/2}}+R(\lambda)\]
and for some $C$ independent of $\phi, a, \lambda$, we have
\[|R(\lambda)|\leq C\lambda^{-3/2}\sum_{0\leq k\leq 4}\sup_{\R}|\p^k a|.\]
Moreover, if assuming $|a^{(k)}(x)|\les 1$ and $|\phi^{(k)}(x)|\les
1$ for $|x|<1$, $k\in \N$, then for any $K\in \N$ we have
\[R(\lambda)=(2\pi)^{1/2}\lambda^{-1/2}e^{i\pi/4}\sum_{k=1}^K\frac{a_k\lambda^{-k}}{k!}+\tilde R(\lambda)\]
where $|a_k|\leq C^k$ and $|\tilde R(\lambda)|\leq
C_K\lambda^{-K-3/2}$ with constants $C,C_K$ independent of
$\phi,\lambda$.
\end{lem}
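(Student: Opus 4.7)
The plan is to prove the lemma by a classical stationary phase expansion at the unique non-degenerate critical point $x=0$ of $\phi$. Since $\phi(0)=\phi'(0)=0$ and $\phi''\sim 1$, no other critical point exists on $\supp a$. The strategy is to normalize the phase to a pure Gaussian via a Morse-type change of variables, evaluate the resulting oscillatory Gaussian explicitly, and handle the error by integration by parts, all while tracking how constants depend on $\phi$.

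First, Morse reduction. Taylor's formula gives $\phi(x)=\tfrac{1}{2}x^2\psi(x)$ with $\psi(x)=2\int_0^1(1-s)\phi''(sx)\,ds$; by hypothesis $\psi(0)=\phi''(0)\sim 1$, and derivatives of $\psi$ up to order $4$ are controlled by $\phi^{(k)}$ for $k\leq 6$. Set $g(x)=x\sqrt{\psi(x)/\phi''(0)}$, so that $\phi(x)=\tfrac{1}{2}\phi''(0)\,g(x)^2$, $g(0)=0$, $g'(0)=1$, and the inverse function theorem gives derivatives of $g^{-1}$ up to order $4$ bounded in terms of $\phi^{(k)}$, $k\leq 6$ (using $\phi''\sim 1$ for lower bounds on denominators). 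Changing variable via $y=g(x)$ with $\mu=\lambda\phi''(0)$ and $b(y)=a(g^{-1}(y))(g^{-1})'(y)$ transforms
\[
I(\lambda)=\int_{-\infty}^\infty e^{i\mu y^2/2} b(y)\,dy,
\]
where $b$ is smooth, compactly supported, and $b(0)=a(0)$.

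Second, evaluate the Gaussian model. Using $\int_\R e^{i\mu y^2/2}\,dy=(2\pi/\mu)^{1/2}e^{i\pi/4}$ (via contour rotation into $\R e^{i\pi/4}$), write $b(y)=b(0)\chi(y)+yB(y)$ where $\chi\in C_0^\infty$ equals $1$ near $\supp b$, and $B$ is smooth with bounded derivatives. The constant piece yields the main term
\[
(2\pi)^{1/2}\lambda^{-1/2}\frac{a(0)\,e^{i\pi/4}}{[\phi''(0)]^{1/2}},
\]
plus a rapidly decaying tail from cutting the Gaussian away from $\supp b$. For the $yB(y)$ term apply $y e^{i\mu y^2/2}=(i\mu)^{-1}\partial_y e^{i\mu y^2/2}$ and integrate by parts once to gain a factor $\mu^{-1}$; the remaining integrand $(i\mu)^{-1}B'(y)e^{i\mu y^2/2}$ has $L^1$ norm controlled by finitely many derivatives of $b$, which in turn are controlled by $\sum_{0\leq k\leq 4}\sup|\partial^k a|$ times constants depending only on $\phi^{(k)}$, $k\leq 6$. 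This yields the first statement with the claimed remainder estimate.

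For the refined statement, iterate: Taylor expand $b(y)=\sum_{k=0}^{2K}\tfrac{b^{(k)}(0)}{k!}y^k+y^{2K+1}B_K(y)$. The even powers contribute via the moment formula $\int e^{i\mu y^2/2}y^{2j}\chi(y)\,dy=(2\pi/\mu)^{1/2}e^{i\pi/4}(i\mu)^{-j}(2j-1)!!+O(\mu^{-N})$, giving the asymptotic series with $a_k$ polynomial in $b^{(2k)}(0)$, hence polynomial in derivatives of $a$ and $\phi$ at $0$, whence $|a_k|\leq C^k$. The odd powers are eliminated by a single integration by parts and absorbed into the next order. The remainder $y^{2K+1}B_K(y)$ is handled by $K+1$ integrations by parts, producing the bound $|\tilde R(\lambda)|\leq C_K\lambda^{-K-3/2}$. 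The main obstacle throughout is the quantitative bookkeeping: verifying that every constant depends only on the finitely many derivatives of $\phi$ appearing in the hypotheses, not on $\phi$ itself. This is ensured by the explicit form of $g$ and the uniform lower bound $\phi''\sim 1$, which makes every derivative of $g^{-1}$ on $\supp b$ a rational expression in $\phi^{(k)}$ with denominators bounded below, so the Gaussian integration-by-parts scheme loses only finitely many derivatives per power of $\mu^{-1}$.
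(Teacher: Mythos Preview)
Your Morse reduction to a pure quadratic phase is exactly what the paper does; the paper writes $y=\psi(x)^{1/2}x$ with the same $\psi$, and then sets $u(y)=a(x(y))x_y(y)$ playing the role of your $b$. From that point on, however, your remainder analysis has a genuine gap. After one integration by parts on the term $\int e^{i\mu y^2/2}yB(y)\,dy$, you obtain $-(i\mu)^{-1}\int e^{i\mu y^2/2}B'(y)\,dy$ and then bound this by the $L^1$ norm of the integrand. That only gives $O(\mu^{-1})=O(\lambda^{-1})$, not the claimed $O(\lambda^{-3/2})$: the extra half power has been lost. The same loss recurs in your higher-order remainder, where $K+1$ integrations by parts produce $O(\mu^{-K-1})$ rather than $O(\mu^{-K-3/2})$.

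The paper avoids this by passing to the Fourier side: using $\mathcal{F}(e^{i\lambda y^2/2})=(2\pi/\lambda)^{1/2}e^{i\pi/4}e^{-i\xi^2/(2\lambda)}$ one writes $I(\lambda)=(2\pi/\lambda)^{1/2}e^{i\pi/4}\int e^{-i\xi^2/(2\lambda)}\hat u(\xi)\,d\xi$, so the factor $\lambda^{-1/2}$ is present from the outset, and the remainder comes from $e^{-i\xi^2/(2\lambda)}-1=O(\xi^2/\lambda)$, giving directly $|R(\lambda)|\lesssim\lambda^{-3/2}\|\widehat{\partial^2 u}\|_{L^1}$. The higher expansion is then simply the Taylor series of $e^{-i\xi^2/(2\lambda)}$, with $a_k=2^{-k}i^k\partial^{2k}u(0)$. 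Your spatial-side route can be repaired --- after the integration by parts, note that $\int e^{i\mu y^2/2}B'(y)\,dy$ is itself a stationary-phase integral and is $O(\mu^{-1/2})$ by Van der Corput (Lemma~\ref{lem:staph}) --- but as written the argument does not deliver the stated bound.
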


\begin{proof}
Since $\phi'(x)=x\cdot \int_0^1\phi''(tx)dt$, then we have
$\phi'(x)\sim x$ if $0<|x|<1$. We write
$\phi(x)=\frac{1}{2}\psi(x)x^2$ with
$\psi(x)=2\int_0^1(1-t)\phi''(tx)dt$, then we know $\psi(x)\sim 1$
if $|x|<1$. Make a change of variables
\[y:=\psi(x)^{1/2}x.\]
We see that $\p_y x=\frac{x}{\phi'(x)}\psi(x)^{1/2}\sim 1$ if
$|x|<1$ and $x_y(0)=\frac{1}{[\phi''(0)]^{1/2}}$. So it determines
unique a function $x=x(y)$. Moreover, using the equality
\[\p_yx=\frac{x}{\phi'(x)}\psi(x)^{1/2}=(\int_0^1\phi''(tx)dt)^{-1}(2\int_0^1(1-t)\phi''(tx)dt)^{1/2}\]
and the condition (3) we easily get
\begin{align}\label{eq:xybound}
|\p_y^{k}(x)|\les 1,\quad 1\leq k\leq 5.
\end{align}
Thus
\[I(\lambda)=\int e^{\frac{i\lambda y^2}{2}}a(x(y))x_y(y)dy.\]
Let $u(y)=a(x(y))x_y(y)$. Using the fact $\ft (e^{-\frac{i\lambda
y^2}{2}})=(2\pi
\lambda^{-1})^{1/2}e^{-\frac{i\pi}{4}}e^{\frac{i\xi^2}{2\lambda}}$,
we get
\begin{align*}
I(\lambda)=&(2\pi\lambda^{-1})^{1/2}e^{\frac{i\pi}{4}}\int
e^{-\frac{i\xi^2}{2\lambda}}\hat u(\xi)d\xi\\
=&(2\pi
\lambda^{-1})^{1/2}e^{\frac{i\pi}{4}}\frac{a(0)}{[\phi''(0)]^{1/2}}+\frac{(2\pi
\lambda^{-1})^{1/2}e^{\frac{i\pi}{4}}}{2\lambda}\int
\frac{e^{(-\frac{i\xi^2}{2\lambda}}-1)2\lambda}{-i\xi^2}(-i\xi^2)\hat
u(\xi)d\xi\\
:=&(2\pi\lambda^{-1})^{1/2}e^{\frac{i\pi}{4}}\frac{a(0)}{[\phi''(0)]^{1/2}}+R(\lambda).
\end{align*}
Now we estimate $R(\lambda)$. By \eqref{eq:xybound} we get
\begin{align*}
|R(\lambda)|\les \lambda^{-3/2}\norm{\widehat{\p^2u}}_{L^1}\les
\lambda^{-3/2}\sum_{0\leq k\leq 2}\sup_{\R}|\p^k \p^2 u|\les
\lambda^{-3/2}\sum_{0\leq k\leq 4}\sup_{\R}|\p^k a|.
\end{align*}

Moreover, if $|a^{(k)}(x)|\les 1$ and $|\phi^{(k)}(x)|\les 1$ for
$|x|<1$, $k\in \N$, then \eqref{eq:xybound} holds for $k\in \N$.
Then by the Taylor's expansion
$e^{-\frac{i\xi^2}{2\lambda}}=\sum_{k=0}^\infty
\frac{(-i\xi^2/(2\lambda))^k}{k!}$, we can prove the expansion for
$R(\lambda)$ with
\[a_k=2^{-k}i^k\p^{2k}u(0).\]
We complete the proof of the lemma.
\end{proof}

\begin{rem}
(1) If $\phi$ depends on $\lambda$, $a$ is independent of $\lambda$,
then $x=x(y,\lambda)$, and
\[\p_\lambda x=-\frac{\p_\lambda \phi}{\p_x\phi}, \quad \p_{y\lambda }^2x=-\frac{y\p_x^2\phi\p_\lambda \phi}{(\p_x\phi)^3}.\]
Then we get
\[|\p_\lambda R(\lambda)|\les \lambda^{-5/2}+\lambda^{-3/2}\sup_{|x|\les 1}\frac{|\p_\lambda \phi|}{|x|^2}.\]

(2) Lemma \ref{lem:Ilambda} applies easily to the general case. If
$\phi$ satisfies $\phi'(x_0)=0$ for some $x_0\in \supp (a)$ and
$\phi'(x)\ne 0$ for $x_0\ne x\in \supp (a)$, then under suitable
conditions
\[I(\lambda)=(2\pi\lambda^{-1})^{1/2}|\phi''(x_0)|^{-1/2}e^{\frac{i\pi}{4}\sgn \phi''(x_0)}e^{i\lambda \phi(x_0)}a(x_0)+O(\lambda^{-3/2}).\]
\end{rem}

We need to deal with $J_\nu(r)$ on the region $r>\nu+\nu^{1/3}$
which is usually the main contribution. The difficulty is that we
need to catch both decay and oscillation, especially in the
transitive region $\nu+\nu^{1/3}<r<2\nu$. In the case $d=2$, we need
a uniform expansion of the Bessel functions in this region. We prove

\begin{lem}[Asymptotical property]\label{lem:Bessel}
Let $\nu>10$ and $r>\nu+\nu^{1/3}$. Then

(1) We have
\[J_\nu(r)=\frac{1}{\sqrt{2\pi}}\frac{e^{i\theta(r)}+e^{-i\theta(r)}}{(r^2-\nu^2)^{1/4}}+h(\nu,r),\]
where
\[\theta(r)=(r^2-\nu^2)^{1/2}-\nu \arccos \frac{\nu}{r}-\frac \pi 4\]
and
\[|h(\nu,r)|\les \bigg(\frac{\nu^2}{(r^2-\nu^2)^{7/4}}+\frac{1}{r}\bigg)1_{[\nu+\nu^{1/3},2\nu]}(r)+r^{-1}1_{[2\nu,\infty)}(r).\]

(2) Let $x_0=\arccos \frac{\nu}{r}$. For any $K\in \N$ we have
\begin{align*}
h(\nu,r)=&(2\pi)^{-1/2}e^{i\theta(r)}x_0\sum_{k=1}^K\frac{
(rx_0^3)^{-k-1/2}a_k(x_0)}{k!}\\
&+(2\pi)^{-1/2}e^{-i\theta(r)}x_0\sum_{k=1}^K\frac{(rx_0^3)^{-k-1/2}\tilde
a_k(x_0)}{k!}+\tilde h(\nu,r)
\end{align*}
with functions $|\p^l a_k|+|\p^l \tilde a_k|\les 1$ for any $l\in
\N$ and
\[|\tilde h(\nu,r)|\les \bigg(\frac{r^{\frac{K}{2}+\frac{1}{4}}}{(r-\nu)^{\frac{3K}{2}+7/4}}+\frac{1}{r}\bigg)1_{[\nu+\nu^{1/3},2\nu]}(r)+r^{-1}1_{[2\nu,\infty)}(r).\]
Moreover, if $\nu\in \Z$, we have better estimate
\[|\tilde h(\nu,r)|\les \frac{r^{\frac{K}{2}+\frac{1}{4}}}{(r-\nu)^{\frac{3K}{2}+7/4}}1_{[\nu+\nu^{1/3},2\nu]}(r)+r^{-3/2}1_{[2\nu,\infty)}(r).\]

\end{lem}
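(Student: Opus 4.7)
The starting point is the Schl\"afli representation \eqref{eq:Besselint}, which splits $J_\nu=J_\nu^M-J_\nu^E$. The elementary bound $\sinh\tau\geq\tau$ gives $|J_\nu^E(r)|\lesssim(r+\nu)^{-1}\lesssim r^{-1}$; this is the sole source of the $r^{-1}$ contribution to $h(\nu,r)$ and, crucially, it vanishes identically for $\nu\in\Z$ because of the prefactor $\sin(\nu\pi)$, which explains the sharper estimate in the integer case. Everything else reduces to a stationary-phase analysis of $J_\nu^M$.

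For $J_\nu^M(r)=\frac{1}{2\pi}\int_{-\pi}^\pi e^{i\phi(x)}dx$ with $\phi(x)=r\sin x-\nu x$, the plan is to rescale $x=x_0y$, where $x_0=\arccos(\nu/r)$ is the positive critical point, producing $\tilde\phi(y)=r\sin(x_0y)-\nu x_0 y$ with stationary points at $y=\pm 1$ and effective large parameter $\lambda=rx_0^3$. Using $\nu=r\cos x_0$ and Taylor expansions of $\sin,\cos$ about $0$, one verifies $\tilde\phi(\pm 1)=\pm(\theta(r)+\pi/4)$, $\tilde\phi''(\pm 1)=\mp x_0^2\sqrt{r^2-\nu^2}$, and $|\tilde\phi^{(k)}(y)|\lesssim rx_0^k$ for $k\geq 2$. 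Consequently, after translating $w=y\mp 1$, the normalized phases $\Phi_\pm(w)=(\tilde\phi(\pm 1+w)-\tilde\phi(\pm 1))/\lambda$ satisfy $\Phi_\pm(0)=\Phi_\pm'(0)=0$, $|\Phi_\pm''(w)|\sim 1$, and $|\Phi_\pm^{(k)}(w)|\lesssim 1$ uniformly on $|w|\leq 1/2$; this places us exactly in the setting of Lemma \ref{lem:Ilambda}. The hypothesis $r>\nu+\nu^{1/3}$ is precisely what forces $\lambda=rx_0^3\sim(r-\nu)^{3/2}/r^{1/2}\gtrsim 1$.

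Localizing to fixed-width neighborhoods of $y=\pm 1$ by a smooth cutoff and applying Lemma \ref{lem:Ilambda} to each, the two leading contributions take the form $(2\pi)^{1/2}\lambda^{-1/2}|\Phi_\pm''(0)|^{-1/2}e^{\mp i\pi/4}\cdot x_0\cdot e^{i\tilde\phi(\pm 1)}$; the phase factors combine to $e^{\pm i\theta(r)}$, and after multiplying by the Schl\"afli prefactor $\frac{1}{2\pi}$ one recovers $\frac{1}{\sqrt{2\pi}}(r^2-\nu^2)^{-1/4}(e^{i\theta(r)}+e^{-i\theta(r)})$, as claimed. The remainder from Lemma \ref{lem:Ilambda} is $\lesssim\lambda^{-3/2}\cdot x_0\sim r^{-3/2}x_0^{-7/2}\sim r^{1/4}(r-\nu)^{-7/4}\sim \nu^2(r^2-\nu^2)^{-7/4}$ in the transitive region, and collapses to $r^{-3/2}$ for $r\geq 2\nu$ (where no rescaling is needed and Lemma \ref{lem:Ilambda} applies directly to $\sin x-(\nu/r)x$ with $\lambda=r$). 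The complementary non-stationary region is handled by two integrations by parts in $y$: combining the factorization $\tilde\phi'(y)=-2rx_0\sin(x_0(y-1)/2)\sin(x_0(y+1)/2)$ with $|\tilde\phi''|\lesssim rx_0^2$ yields a contribution of order $x_0/\lambda^2$, which is absorbed by the stationary-phase error since $\lambda\gtrsim 1$.

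Part~(2) then follows from the Taylor-series refinement at the end of Lemma \ref{lem:Ilambda}: because all derivatives of $\Phi_\pm$ and of the (essentially constant) amplitude are bounded uniformly on the stationary neighborhood, the expansion $R(\lambda)=(2\pi)^{1/2}\lambda^{-1/2}e^{i\pi/4}\sum_{k=1}^Ka_k\lambda^{-k}/k!+\tilde R(\lambda)$ applies, and restoring $\lambda=rx_0^3$ together with the Jacobian factor $x_0$ produces the series $x_0(rx_0^3)^{-k-1/2}a_k(x_0)/k!$ with $|a_k|\lesssim 1$, as well as the residual bound $|\tilde h|\lesssim x_0\lambda^{-K-3/2}\sim r^{K/2+1/4}(r-\nu)^{-3K/2-7/4}$. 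For $\nu\in\Z$, dropping the $J_\nu^E$ piece gives the sharper remainder. The main obstacle throughout is maintaining uniformity down to the transition edge $r=\nu+\nu^{1/3}$, where $\lambda$ is only bounded below by a constant and $|\Phi_\pm''(0)|$ is non-degenerate only by a narrow margin: this is exactly why the rescaling $x=x_0 y$ is indispensable and why the $C^\infty$ bounds on $\Phi_\pm$ must be established by explicit Taylor analysis rather than borrowed from a compactness argument.
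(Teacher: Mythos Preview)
Your approach is essentially the same as the paper's---rescale to bring the transitive-region stationary points to a fixed location, then apply Lemma~\ref{lem:Ilambda}---and the stationary-phase computation is correct. There is, however, a genuine gap in your treatment of the non-stationary region.

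The integral $J_\nu^M$ runs over the \emph{finite} interval $[-\pi,\pi]$ (equivalently $[-\pi/x_0,\pi/x_0]$ in your $y$-variable), and the complementary cutoff equals $1$ at the endpoints. Hence any integration by parts produces boundary terms there. After one integration by parts the boundary term at $y=\pm\pi/x_0$ has size $x_0/|\tilde\phi'(\pm\pi/x_0)|\sim x_0/(rx_0)=r^{-1}$, so your claimed bound $x_0/\lambda^2$ for the full non-stationary contribution is false: there is an unavoidable $r^{-1}$ piece coming from the boundary of $J_\nu^M$, not only from $J_\nu^E$. In the paper this is isolated as the term $II_3^2$ (the part of the non-stationary region with $|x|\gtrsim 1$), for which a single integration by parts gives $O(r^{-1})$.

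This matters most for the integer-$\nu$ improvement. Saying ``drop $J_\nu^E$'' is not enough: you must also show that the boundary contributions from $J_\nu^M$ at $x=\pm\pi$ cancel. They do, because for $\nu\in\Z$ one has $e^{i(r\sin\pi-\nu\pi)}=e^{i(r\sin(-\pi)+\nu\pi)}$ and $\phi'(\pi)=\phi'(-\pi)$, so the two endpoint terms agree and the bracket vanishes; a second integration by parts then yields $O(r^{-2})$. The paper makes exactly this observation for $I_3$ and $II_3^2$. Without it your argument for the sharper bounds $r^{-3/2}$ (when $r\ge 2\nu$) and the removal of the $1/r$ term in $\tilde h$ is incomplete.
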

\begin{proof}
Part (1) was given in \cite{BC}. Here we give a proof by Lemma
\ref{lem:Ilambda}. If $\nu\in \Z$, then $J_\nu^E(r)=0$. Thus it
suffices to consider $J_\nu^M(r)$. Denote $\phi(x)=\sin
x-\frac{\nu}{r}x$. Let $\phi'(x)=\cos x-\frac{\nu}{r}=0$, then we
find two solutions $x=\pm x_0=\pm \arccos \frac{\nu}{r}$. Since
$\nu<r$, we get $x_0\sim \frac{\sqrt{r^2-\nu^2}}{r}<1$. We divide
the proof into two cases.

{\bf Case 1.} $r\geq 2\nu$.

In this case we have $x_0\sim 1$. Let $\beta(x)$ be a cutoff
function around $0$ and supported in $\{|x|\ll 1\}$. Let $\tilde
\beta=1-\beta(x-x_0)-\beta(x+x_0)$. Then
\[J_\nu^M(r)=\frac{1}{2\pi}\int_{-\pi}^\pi e^{ir\phi(x)}[\beta(x-x_0)+\beta(x+x_0)+\tilde \beta(x)]dx:=I_1+I_2+I_3.\]
First, we estimate the term $I_3$. Since $|\phi'(x)|\sim 1$ in
$\supp \tilde\beta$, integrating by part we get that
\begin{align*}
|I_3|\les |\int_{-\pi}^\pi \frac{\p_x[e^{ir\phi(x)}]}{ir\phi'(x)}
\tilde\beta(x)dx|\les r^{-1}.
\end{align*}
If $\nu\in \Z$, we can do better since the boundary term vanishes.
Indeed, in this case from the fact that
$e^{ir\phi(\pi)}=e^{ir\phi(-\pi)}$, $\phi'(\pi)=\phi'(-\pi)$,
$\tilde\beta(\pi)=\tilde\beta(-\pi)$, we can get
\begin{align*}
|I_3|\les r^{-2}, \quad \mbox{ if } \nu\in \Z.
\end{align*}
Now we consider the term $I_1$. We have
\[I_1=\frac{1}{2\pi}\int e^{ir\phi(x+x_0)}\beta(x)dx.\]
It is easy to check that $\phi(x+x_0)-\phi(x_0),\beta$ satisfy the
conditions in Lemma \ref{lem:Ilambda}. Thus by Lemma
\ref{lem:Ilambda} we get
\[I_1=\frac{1}{\sqrt{2\pi}}\frac{e^{i\theta(r)}}{(r^2-\nu^2)^{1/4}}+R_1(\nu,r)\]
with $|R_1|\les r^{-3/2}$. Similarly, for $I_2$ we have
\[I_2=\frac{1}{\sqrt{2\pi}}\frac{-e^{i\theta(r)}}{(r^2-\nu^2)^{1/4}}+R_2(\nu,r)\]
with $|R_2|\les r^{-3/2}$. Therefore, we prove part (1) by setting
$h=R_1+R_2+I_3+J_\nu^E$.

{\bf Case 2.} $r<2\nu$.

Let $\gamma=1-\beta(\frac{x-x_0}{x_0})-\beta(\frac{x+x_0}{x_0})$.
Then
\[J_\nu^M(r)=\frac{1}{2\pi}\int_{-\pi}^\pi e^{ir\phi(x)}[\beta(\frac{x-x_0}{x_0})+\beta(\frac{x+x_0}{x_0})+\gamma(x)]dx:=II_1+II_2+II_3.\]
First, we estimate the term $I_1$. We have
\begin{align*}
II_1=\frac{1}{2\pi}\int
e^{ir\phi(x)}\beta(\frac{x-x_0}{x_0})dx=\frac{x_0}{2\pi}\int
e^{irx_0^3\cdot x_0^{-3}\phi(x_0x+x_0)}\beta(x)dx.
\end{align*}
By the condition $r>\nu+\nu^{1/3}$ we get $rx_0^3\ges 1$. Let
$\tilde\phi(x)=x_0^{-3}[\phi(x_0x+x_0)-\phi(x_0)]$. By the mean
value formula we can verify the conditions in Lemma \ref{} for
$\tilde\phi(x),\beta$. Thus by Lemma \ref{lem:Ilambda} we get
\[II_1=\frac{1}{\sqrt{2\pi}}\frac{e^{i\theta(r)}}{(r^2-\nu^2)^{1/4}}+\tilde R_1\]
with $|\tilde R_1|\les x_0(rx_0^3)^{-3/2}\les
\frac{\nu^2}{(r^2-\nu^2)^{7/4}}$. Similarly, for $II_2$ we get
\[II_2=\frac{1}{\sqrt{2\pi}}\frac{-e^{i\theta(r)}}{(r^2-\nu^2)^{1/4}}+\tilde R_2\]
with $|\tilde R_2|\les x_0(rx_0^3)^{-3/2}\les
\frac{\nu^2}{(r^2-\nu^2)^{7/4}}$.

Now we estimate the term $II_3$. We have
\[II_3=\frac{1}{2\pi}\int
e^{ir\phi(x)}\eta(x)\gamma(x)dx+\frac{1}{2\pi}\int_{-\pi}^{\pi}
e^{ir\phi(x)}(1-\eta(x))dx:=II_3^1+II_3^2.\] For the term $II_3^1$,
it's easy to see that $|\phi'(x)|\ges x_0^2,\,
|\p_x^k(\frac{1}{\phi'(x)})|\les x_0^{-2-k}$, $\forall k\in \N$ for
$x\in \supp (\eta\gamma)$, then integrating by parts we get that
\[|II_3^1|\les r^{-K}\bigg|\int
e^{ir\phi(x)}(\p_x
\frac{1}{\phi'(x)})^K\big[\eta(x)\gamma(x)\big]dx\bigg|\les x_0
(rx_0^3)^{-K}\les \frac{\nu^2}{(r^2-\nu^2)^{7/4}}.\] For the term
$II_3^2$, we have $|\phi'(x)|\sim 1$ for $x\in \supp(1-\eta)$. Thus
we get $|II_3^2|\les r^{-1}$ using integration by parts. If $\nu\in
\Z$, as in case 1, the boundary value vanishes, and we get
$|II_3^2|\les r^{-2}$. Thus we prove part (1) by setting $h=\tilde
R_1+\tilde R_2+II_3+J_\nu^E$.

Now we prove part (2). We only need to consider $\tilde R_1,\tilde
R_2$ in case 2. By Lemma \ref{lem:Ilambda} we have
\begin{align*}
\tilde
R_1=&x_0(2\pi)^{-1/2}e^{ir\phi(x_0)}(rx_0^3)^{-1/2}e^{i\pi/4}\sum_{k=1}^K\frac{a_k(rx_0^3)^{-k}}{k!}+x_0O((rx_0^3)^{-K-3/2})\\
=&(2\pi)^{-1/2}e^{i\theta(r)}x_0\sum_{k=1}^K\frac{a_k
(rx_0^3)^{-k-1/2}}{k!}+x_0O((rx_0^3)^{-K-3/2}).
\end{align*}
We can obtain the expansion for $\tilde R_2$ similarly. We complete
the proof.
\end{proof}

\section{Spherically averaged Strichartz estimates}

In this section, we prove Theorem \ref{thm1} by improving the proof
in \cite{GLNW}. First, we reproduce some proof in \cite{GLNW} for
the readers' convenience. To prove \eqref{eq:striestwea}, it is
equivalent to show
\begin{align}\label{eq:Stri2}
\norm{T_af}_{L_t^q\Lr_{\rho}^pL_\omega^2}\les \norm{f}_{L_x^2},
\end{align}
where
\[T_af(t,x)=\int_{\R^d}e^{i(x\xi+t|\xi|^a)}\chi_0(|\xi|)f(\xi)d\xi.\]
Now we expand $f$ by the orthonormal basis $\{Y_k^l\}$, $k\geq
0,1\leq l\leq d(k)$ of spherical harmonics with
$d(k)=C_{n+k-1}^k-C_{n+k-3}^{k-2}$, such that
\[f(\xi)=f(\rho \sigma)=\sum_{k\geq 0}\sum_{1\leq l\leq d(k)}a_k^l(\rho)Y_k^l(\sigma).\]
Using the identities (see \cite{Stein1})
\[\widehat{Y_k^l}(\rho\sigma)=c_{d,k}\rho^{-\frac{d-2}{2}}J_\nu(\rho)Y_k^l(\sigma)\]
where $c_{d,k}=(2\pi)^{d/2}i^{-k}$, $\nu=\nu(k)=\frac{d-2+2k}{2}$,
then we get
\[T_af(t,x)=\sum_{k,l}c_{d,k}T_a^\nu (a_k^l)(t,|x|)Y_k^l(x/|x|),\]
where
\[T_a^\nu(h)(t,r)=r^{-\frac{d-2}{2}}\int e^{-it\rho^a}J_\nu(r\rho)\rho^{d/2}\chi_0(\rho)h(\rho)d\rho.\]
Here $J_\nu(r)$ is the Bessel function. Thus \eqref{eq:Stri2}
becomes
\begin{align}\label{eq:Stri3}
\norm{T_a^\nu (a_k^l)}_{L_t^q\Lr_{r}^pl_{k,l}^2}\les
\norm{\{a_k^l(\rho)\}}_{\Lr_\rho^2l_{k,l}^2}.
\end{align}
To prove \eqref{eq:Stri3}, it is equivalent to show
\begin{align}\label{eq:goal}
\norm{T_a^\nu (h)}_{L_t^q\Lr_{r}^p}\les \norm{h}_{L^2},
\end{align}
with a bound independent of $\nu$, since $q,p\geq 2$.

By the classical Strichartz estimates (see the endpoint estimates in
\cite{KT, MaNaNaOz05}
), we can get $\norm{1_{r\leq 100}T_a^\nu (h)}_{L_t^q\Lr_{r}^p}\les
\norm{h}_{L^2}$. Thus it remains to show
\begin{align}\label{eq:goal2}
\norm{1_{r\gg 1}T_a^\nu (h)}_{L_t^q\Lr_{r}^p}\les \norm{h}_{L^2},
\end{align}
with a bound independent of $\nu$. For any $R\gg 1$, define
\[S_R^{\nu,a}(h)(t,r)=\chi_0\big(\frac rR\big)\int e^{-it\rho^a}J_\nu(r\rho)\chi_0(\rho)h(\rho)d\rho.\]
Then
\[\norm{1_{r\gg 1}T_a^\nu}_{L^2\to L_t^q\Lr_{r}^p}\les \sum_{j\geq 5}2^{j(\frac{d-1}{p}-\frac{d-2}{2})}\norm{S_{2^j}^{\nu,a}}_{L^2\to L_t^qL_{r}^p}.\]
Then to prove \eqref{eq:goal2}, it suffices to show for some
$\delta>0$
\begin{align}\label{eq:goal3}
R^{\frac{d-1}{p}-\frac{d-2}{2}}\norm{S_R^{\nu,a}
(h)}_{L_t^qL_{r}^p}\leq C R^{-\delta}\norm{h}_{L^2},
\end{align}
where $C$ is independent of $\nu$. By interpolation, we only need to
show \eqref{eq:goal3} for $(q,p)=(2,p)$.  The difficulty in
\eqref{eq:goal3} is to obtain a uniform bound as $\nu\to \infty$. We
need to exploit the uniform properties of the Bessel function with
respect to $\nu$. By the uniform decay of Bessel function presented
in Lemma \ref{lem:Bessdecay}, one can show

\begin{lem}[Lemma 2.4, \cite{GLNW}]\label{prop:roughes}
Assume $a>0$. For $\nu>10$, $R\ges 1$, $2\leq p\leq \infty$
\begin{align}
\norm{S_R^{\nu,a} (h)}_{L_t^2 L_{r}^p}\les \norm{h}_{L^2}, \quad
\norm{S_R^{\nu,a} (h)}_{L_t^\infty L_{r}^2}\les \norm{h}_{L^2}.
\end{align}
\end{lem}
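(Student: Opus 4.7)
I would treat the two estimates separately.

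The $L_t^\infty L_r^2$ bound is of energy type and follows from the Hankel-isometry structure: the kernel $(r\rho)^{1/2} J_\nu(r\rho)$ defines an isometry $L^2(\R^+, d\rho) \to L^2(\R^+, dr)$. Factoring
\[S_R^{\nu,a}(h)(t,r) = \chi_0(r/R)\, r^{-1/2} \int_0^\infty (r\rho)^{1/2} J_\nu(r\rho) \cdot \rho^{-1/2} e^{-it\rho^a} \chi_0(\rho) h(\rho)\, d\rho,\]
this isometry combined with $\supp\chi_0(r/R) \subseteq \{r \sim R\}$ and $\supp\chi_0(\rho) \subseteq \{\rho\sim 1\}$ yields $\norm{S_R^{\nu,a}(h)(t,\cdot)}_{L_r^2} \les R^{-1/2} \norm{h}_{L^2}$, uniformly in $t$ and $\nu$; this is in fact stronger than claimed.

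For the $L_t^2 L_r^p$ bound, I would interpolate between the endpoints $p=2$ and $p=\infty$. Plancherel in $t$ (after the change of variable $\sigma = \rho^a$) gives, pointwise in $r$, the bound
\[\norm{S_R^{\nu,a}(h)(\cdot, r)}_{L_t^2}^2 \les \chi_0(r/R)^2 \int_0^\infty |J_\nu(r\rho)|^2 \chi_0(\rho)^2 |h(\rho)|^2\, d\rho,\]
using that $\rho^{1-a}$ is bounded on $\rho\sim 1$. For $p=2$, Fubini reduces matters to showing $\int \chi_0(r/R)^2 |J_\nu(r\rho)|^2\, dr \les 1$ uniformly for $\rho\sim 1$: this follows from Lemma \ref{lem:Bessdecay2} when $R \ll \nu$, from the large-argument asymptotic $|J_\nu(r)|^2 \sim 1/r$ when $R \gg \nu$, and in the transitive regime $R \sim \nu$ from the refined bound $|J_\nu(r)|^2 \les r^{-2/3}(1 + r^{-1/3}|r-\nu|)^{-1/2}$ of Lemma \ref{lem:Bessdecay}, where the substitution $w = r^{-1/3}(r-\nu)$ makes the resulting integral uniformly finite.

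The endpoint $p=\infty$ is the delicate part, which I would handle via $TT^*$. The Schwartz kernel of $S_R^{\nu,a}(S_R^{\nu,a})^*$ is
\[K(\tau, r, r') = \chi_0(r/R)\chi_0(r'/R) \int e^{-i\tau\rho^a} J_\nu(r\rho) J_\nu(r'\rho) \chi_0(\rho)^2\, d\rho,\]
and plugging in the two-term Bessel asymptotic from Lemma \ref{lem:Bessel} and applying stationary phase in $\rho$ against the phase $-\tau\rho^a \pm \theta(r\rho) \pm \theta(r'\rho)$ should give a dispersive decay of the form $|K(\tau, r, r')| \les R^{-1}(1+|\tau|)^{-1/2}$ on the support $r, r' \sim R$. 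The Keel--Tao bilinear argument then produces the $L_t^2 L_r^\infty$ endpoint, and Riesz--Thorin interpolation fills in the intermediate $p$. The main obstacle I anticipate is obtaining this dispersive kernel bound uniformly in $\nu$ through the transitive region $R\sim\nu$, where both the amplitude $(r^2\rho^2-\nu^2)^{-1/4}$ and the phase $\theta(r\rho)$ of the Bessel asymptotic become singular, and then running the endpoint $TT^*$ with $\nu$-independent constants.
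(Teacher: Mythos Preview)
Your treatment of the $L_t^\infty L_r^2$ bound via the Hankel isometry and of the $L_t^2 L_r^2$ bound via Plancherel in $t$ plus the uniform decay of $J_\nu$ is correct and matches the spirit of the paper's argument.

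The gap is at the $L_t^2 L_r^\infty$ endpoint. You propose a full $TT^*$ dispersive estimate plus the Keel--Tao bilinear machinery, and you correctly flag the transitive region $R\sim\nu$ as an obstacle. In fact this route essentially reproduces the content of the \emph{refined} lemmas (Lemmas~\ref{lem:SR} and~\ref{lem:SR3imp}), where one needs a gain in $R$; here no such gain is required, and the paper's indicated proof is far simpler. The paper says the lemma follows from the uniform decay in Lemma~\ref{lem:Bessdecay}, and that lemma deliberately bounds \emph{both} $|J_\nu(r)|$ and $|J_\nu'(r)|$. That is the hint you missed: one-dimensional Sobolev embedding in $r$ gives
\[
\norm{S_R^{\nu,a}(h)(t,\cdot)}_{L_r^\infty}^2 \les \norm{S_R^{\nu,a}(h)(t,\cdot)}_{L_r^2}^2 + \norm{\p_r S_R^{\nu,a}(h)(t,\cdot)}_{L_r^2}^2,
\]
and integrating in $t$ reduces $L_t^2 L_r^\infty$ to two $L_t^2 L_r^2$ estimates. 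The $\p_r$ produces either $R^{-1}\chi_0'(r/R)$ (harmless) or $J_\nu'(r\rho)\rho$, and since Lemma~\ref{lem:Bessdecay} gives $|J_\nu'|$ the same pointwise bound as $|J_\nu|$, the derivative term is handled by exactly the same Plancherel~$+$~Fubini argument you already wrote down for $p=2$. No oscillatory-integral analysis and no splitting into $S_{R,j}^\nu$ is needed at this stage; that decomposition enters only later when an actual power of $R$ must be gained. (Compare also Lemma~\ref{lem:SR2imp}, where the paper explicitly invokes Sobolev embedding for the same purpose.)
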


This lemma gives the sharp estimates for the wave equation $a=1$.
For the Schr\"odinger case $a>1$, there is some more oscillatory
effect to exploit. To do so, in \cite{GLNW}, $S_R^{\nu,a}$ is
decomposed into three operators $S_R^\nu (h)=\sum_{j=1}^3S_{R,j}^\nu
(h)$
 where
\[S_{R,j}^\nu (h)=\chi_0\big(\frac rR\big)\int
e^{-it\rho^2}J_\nu(r\rho)\gamma_j(\frac{r\rho-\nu}{\lambda})\chi_0(\rho)h(\rho)d\rho,\]
with $\gamma_1(x)=\eta(x)$, $\gamma_2(x)=(1-\eta(x))1_{x<0}$, and
$\gamma_3(x)=(1-\eta(x))1_{x>0}$. By some uniform stationary phase
analysis, the following lemma was proved in \cite{GLNW}

\begin{lem}[Lemma 2.5, \cite{GLNW}]\label{lem:SR} Assume $R\ges 1, \lambda\geq 100 R^{1/3}$, $2\leq p\leq \infty$. Then
\begin{align*}
\norm{S_{R,1}^\nu (h)}_{L_t^2 L_{r}^p}\les&
\lambda^{1/4}R^{-1/4}\norm{h}_{L^2},\\
\norm{S_{R,2}^\nu (h)}_{L_t^2
L_{r}^p}\les& \big((\lambda^{-1}R^{1/4})^{1-\frac{2}{p}}+R^{-1/2}\big)\norm{h}_{L^2},\\
\norm{S_{R,3}^\nu (h)}_{L_t^2 L_{r}^p}\les&
\big(\lambda^{-\frac{1}{4}(1-\frac{2}{p})}+(\lambda^{-5/4}R^{1/4})^{2/p}+R^{-1/p}\big)\norm{h}_{L^2}.
\end{align*}
\end{lem}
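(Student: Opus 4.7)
The plan is to estimate each $S_{R,j}^\nu$ by combining Plancherel in the time variable with a uniform-in-$\nu$ stationary-phase analysis of the remaining $\rho$-integral, drawing on the Bessel function estimates of Section 2. For each $j$ I would first obtain the $L_t^2L_r^2$ and $L_t^2L_r^\infty$ endpoint bounds and then interpolate in $p$.

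After the substitution $u=\rho^a$ the operator $h\mapsto S_{R,j}^\nu h(\cdot,r)$ is a weighted Fourier transform in $t$, so Plancherel yields
\[\|S_{R,j}^\nu h(\cdot,r)\|_{L_t^2}^2 \sim \chi_0(r/R)^2 \int |J_\nu(r\rho)|^2 \gamma_j\!\left(\frac{r\rho-\nu}{\lambda}\right)^{\!2}\chi_0(\rho)^2|h(\rho)|^2\rho^{1-a}\,d\rho.\]
The $L_t^2L_r^\infty$ endpoint follows by taking $\sup_r$ and applying Minkowski; the $L_t^2L_r^2$ endpoint follows by integrating in $r$ after the change of variables $s=r\rho$. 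For $S_{R,1}^\nu$, on $\supp\gamma_1$ we have $|s-\nu|\le\lambda$, so integrating the Bessel bound from Lemma~\ref{lem:Bessdecay} gives $\int_{|s-\nu|\le\lambda}|J_\nu(s)|^2\,ds\lesssim R^{-1/2}\lambda^{1/2}$, producing the $L_t^2L_r^2$ bound $R^{-1/4}\lambda^{1/4}$, while $\sup_s|J_\nu(s)|\lesssim R^{-1/3}$ yields the $L_t^2L_r^\infty$ endpoint. Using the hypothesis $\lambda\ge R^{1/3}$, interpolation in $p$ gives the uniform bound $\lambda^{1/4}R^{-1/4}$ for all $p\in[2,\infty]$.

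For $S_{R,2}^\nu$ I would substitute the Schl\"afli representation into the definition and observe that the double-integral phase $-t\rho^a+r\rho\sin x-\nu x$ is nonstationary in $x$ on $\supp\gamma_2$ since $r\rho\cos x<r\rho<\nu-\lambda$; integrating by parts in $x$ via Lemma~\ref{lem:staph} produces the $\lambda^{-1}$ decay responsible for the $(\lambda^{-1}R^{1/4})^{1-2/p}$ piece, while directly integrating the pointwise Bessel bound of Lemma~\ref{lem:Bessdecay} accounts for the additional $R^{-1/2}$ contribution. The main case is $S_{R,3}^\nu$, where $r\rho-\nu\ge\lambda\ge R^{1/3}$ and Lemma~\ref{lem:Bessel} gives the uniform expansion
\[J_\nu(r\rho)=\frac{e^{i\theta(r\rho)}+e^{-i\theta(r\rho)}}{\sqrt{2\pi}\,((r\rho)^2-\nu^2)^{1/4}}+h_{\mathrm{err}}(\nu,r\rho);\]
substituting the main terms into $S_{R,3}^\nu h$ produces oscillatory integrals in $\rho$ with phase $\Phi_\pm(\rho)=-t\rho^a\pm\theta(r\rho)$. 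Using $\theta'(s)=\sqrt{s^2-\nu^2}/s$ one computes
\[\partial_\rho^2\Phi_\pm = -a(a-1)t\rho^{a-2} \pm \frac{\nu^2}{\rho^2\sqrt{(r\rho)^2-\nu^2}},\]
and Van der Corput with $k=2$ (Lemma~\ref{lem:staph}) then produces the decay driving the three contributions in the stated estimate: the $\lambda^{-(1/4)(1-2/p)}$ main term, the $(\lambda^{-5/4}R^{1/4})^{2/p}$ piece from the transition regime $r\rho\sim\nu+\lambda$, and the $R^{-1/p}$ remainder from $h_{\mathrm{err}}$.

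The principal obstacle is the $S_{R,3}^\nu$ case: the stationary-phase analysis must be carried out uniformly in $\nu$ across the whole range $R^{1/3}\le\lambda\le R$, where the phase $\theta$ degenerates as $r\rho\to\nu$ and the amplitude $((r\rho)^2-\nu^2)^{-1/4}$ is singular. The finer-scale uniform expansion provided by Lemma~\ref{lem:Bessel}, in particular the higher-order remainder $\tilde h$ with its $(rx_0^3)^{-k-1/2}$ factors, is exactly what is needed to control the amplitude derivatives and close the estimate.
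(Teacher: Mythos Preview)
The paper does not contain a proof of this lemma: it is quoted verbatim as Lemma~2.5 of \cite{GLNW} and used as an input, not reproved. So there is no ``paper's own proof'' to compare your attempt against; the proof lives in the cited reference.

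That said, your sketch is broadly on the right track for what the \cite{GLNW} argument does: Plancherel in $t$ followed by uniform-in-$\nu$ estimates on the resulting $\rho$-integral, with the $S_{R,1}^\nu$ piece handled by the pointwise Bessel decay of Lemma~\ref{lem:Bessdecay}, the $S_{R,2}^\nu$ piece by nonstationary phase in the Schl\"afli integral, and the $S_{R,3}^\nu$ piece by substituting the leading asymptotic of $J_\nu$ and running Van der Corput on the phase $-t\rho^a\pm\theta(r\rho)$. Your $S_{R,1}^\nu$ computation is correct.

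Where you go astray is in invoking the higher-order expansion of Lemma~\ref{lem:Bessel} (the $\tilde h$ remainder with $(rx_0^3)^{-k-1/2}$ factors) as ``exactly what is needed to close the estimate.'' That refinement is new to \emph{this} paper and is precisely the ingredient that upgrades the $S_{R,3}^\nu$ bound from the cited $\lambda^{-\frac14(1-2/p)}$ of Lemma~\ref{lem:SR} to the stronger $R^{-\frac14(1-2/p)}$ of Lemma~\ref{lem:SR3imp}. For the weaker bound actually stated here, only the leading term of the asymptotic (part~(1) of Lemma~\ref{lem:Bessel}, already essentially in \cite{BC}) is needed, together with a single application of Van der Corput. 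In other words, you have partially written a proof of the \emph{improved} Lemma~\ref{lem:SR3imp} rather than the cited Lemma~\ref{lem:SR}; the distinction matters because the paper's whole point in Section~3.1 is that the finer dyadic-in-$k$ decomposition and the higher-order Bessel expansion are what yield the improvement over \cite{GLNW}.
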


\subsection{The improvement: non-endpoint}
To prove Theorem \ref{thm1}, we will refine the estimates for
$S_{R,3}^\nu$ and $S_{R,2}^\nu$. We prove

\begin{lem}\label{lem:SR3imp} Assume $R\ges 1, \lambda\geq 100 R^{1/3}$, $2\leq p\leq \infty$.
Then for any $K\in \N$
\begin{align}
\norm{S_{R,3}^\nu (h)}_{L_t^2 L_{r}^p}\les&
\big(R^{-\frac{1}{4}(1-\frac{2}{p})}+(R^{-1/2}\lambda^{3/2})^{-2K/p}(\lambda^{-5/4}R^{1/4})^{2/p}+R^{-1/p}\big)\norm{h}_{L^2}.
\end{align}
Moreover, if $\nu\in \Z$ we have
\begin{align}
\norm{S_{R,3}^\nu (h)}_{L_t^2 L_{r}^p}\les&
\big(R^{-\frac{1}{4}(1-\frac{2}{p})}+(R^{-1/2}\lambda^{3/2})^{-2K/p}(\lambda^{-5/4}R^{1/4})^{2/p}+R^{-2/p}\big)\norm{h}_{L^2}.
\end{align}
\end{lem}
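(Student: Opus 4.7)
The strategy is to refine the argument of Lemma~\ref{lem:SR} by invoking the finer uniform asymptotic expansion of Lemma~\ref{lem:Bessel}(2) (in place of a single leading-order formula) together with a dyadic decomposition on scales finer than~$\lambda$. On the support of $\gamma_3((r\rho-\nu)/\lambda)$, where $r\rho-\nu\geq\lambda\geq 100R^{1/3}$, I would apply Lemma~\ref{lem:Bessel}(2) to split
\begin{equation*}
J_\nu(r\rho)=\sum_{\pm}\frac{e^{\pm i\theta(r\rho)}}{\sqrt{2\pi}\,(r^2\rho^2-\nu^2)^{1/4}}+\sum_{\pm}e^{\pm i\theta(r\rho)}\,x_0\sum_{k=1}^{K}\frac{(r\rho\,x_0^3)^{-k-\frac12}a_k^{\pm}(x_0)}{k!\sqrt{2\pi}}+\tilde h(\nu,r\rho),
\end{equation*}
with $x_0=\arccos(\nu/(r\rho))$, and correspondingly decompose $S_{R,3}^{\nu}=\sum_{\pm}S^{\pm}_{\mathrm{main}}+\sum_{\pm}\sum_{k=1}^{K}S^{\pm}_{k}+S_{\mathrm{rem}}$.

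The main pieces $S^{\pm}_{\mathrm{main}}$ are handled by a further dyadic localization $r\rho-\nu\sim\mu$ with $\lambda\leq\mu\lesssim R$. At each scale $\mu$ the amplitude has size $(R\mu)^{-1/4}$ on a $\rho$-interval of measure $\sim\mu/R$, and (using $\theta''(s)=\nu^{2}s^{-2}(s^{2}-\nu^{2})^{-1/2}$) the full phase $\Phi(\rho)=-t\rho^{a}\pm\theta(r\rho)$ satisfies $|\partial_\rho^{2}\theta(r\rho)|\sim R^{3/2}\mu^{-1/2}$ in the worst case $\nu\sim R$. Running the stationary phase / $TT^{*}$ argument of \cite{GLNW} at each scale (via Lemma~\ref{lem:Ilambda} and the subsequent remark), then summing geometrically over $\mu\in[\lambda,R]$, produces the leading $R^{-\frac14(1-\frac2p)}$ bound; the cruder estimate $\lambda^{-\frac14(1-\frac2p)}$ of \cite{GLNW} corresponds to keeping only the smallest scale $\mu=\lambda$ without aggregating the gain from larger~$\mu$.

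Each correction $S_{k}^{\pm}$ has the same phase as $S^{\pm}_{\mathrm{main}}$ but amplitude multiplied by $(r\rho\,x_0^3)^{-k}$. Since $r\rho\,x_0^3\sim(r\rho)^{-1/2}(r\rho-\nu)^{3/2}\gtrsim R^{-1/2}\lambda^{3/2}$, this factor is bounded by $(R^{-1/2}\lambda^{3/2})^{-k}$, and the bounds $|\partial^{l}a_{k}^{\pm}|\lesssim 1$ allow the estimates of the preceding step to go through unchanged. Thus $S_{k}^{\pm}$ obeys the same shape of bound as $S^{\pm}_{\mathrm{main}}$, multiplied by $(R^{-1/2}\lambda^{3/2})^{-k}$; interpolating the $L^{2}\to L^{2}_{t}L^{\infty}_{r}$ bound for $S_{K}^{\pm}$---which retains the $\lambda^{-5/4}R^{1/4}$ shape from Lemma~\ref{lem:SR}---against the trivial $L^{2}\to L^{2}_{t}L^{2}_{r}$ bound yields the middle term $(R^{-1/2}\lambda^{3/2})^{-2K/p}(\lambda^{-5/4}R^{1/4})^{2/p}$.

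Finally, for $S_{\mathrm{rem}}$ I would use the pointwise bound from Lemma~\ref{lem:Bessel}(2): on $[\nu+\nu^{1/3},2\nu]$ the factor $(r\rho)^{K/2+1/4}(r\rho-\nu)^{-3K/2-7/4}$ is integrable in $\rho$ and, for $K$ chosen large enough, contributes a term absorbed into the middle bound; the piece supported in $[2\nu,\infty)$ gives, after Cauchy--Schwarz in $\rho$ and Plancherel in~$t$, a bound of order $R^{-1/p}$, which upgrades to $R^{-2/p}$ when $\nu\in\Z$ thanks to the extra $r^{-1/2}$ decay of $\tilde h$ available in that case. The main obstacle is the second step: extracting the sharp $R^{-\frac14(1-\frac2p)}$ decay requires a careful stationary phase / $TT^{*}$ analysis at the finer scale $\mu$ and a clean summation over the logarithmic-in-$R/\lambda$ range of scales without logarithmic losses, together with handling the delicate sub-region $\nu\sim R$ where the $\rho$-support is narrow and one must fully exploit the oscillatory interplay between $e^{\pm i\theta(r\rho)}$ and $e^{-it\rho^{a}}$.
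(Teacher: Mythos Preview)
Your overall strategy matches the paper's proof: split $S_{R,3}^\nu$ via the expansion of Lemma~\ref{lem:Bessel} into a main oscillatory piece, higher-order corrections, and a remainder; treat the main piece by a further dyadic localization $r\rho-\nu\sim 2^k$ together with a $TT^*$ kernel estimate; and control the remainder by the pointwise decay of $\tilde h$. The paper carries out the $TT^*$ step not through Lemma~\ref{lem:Ilambda} but by Van der Corput (Lemma~\ref{lem:staph}) and direct integration by parts on the kernel, using the key observation that whenever $|\partial_\rho\phi_2|\ll|t|$ one has $|\partial_\rho^2\phi_2|\gtrsim|t|R\,2^{-k}$; summing the resulting bound $2^{k/8}R^{-3/8}$ over $2^k\lesssim R$ yields the $R^{-1/4}$ endpoint.

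Your bookkeeping for the middle term is off, however. The oscillatory corrections $S_k^\pm$ carry the same phase as $S_{\mathrm{main}}^\pm$ with amplitude smaller by at most $(R^{-1/2}\lambda^{3/2})^{-k}$; running the identical dyadic $TT^*$ argument shows they obey the \emph{first} bound $R^{-\frac14(1-\frac2p)}$ (with room to spare) and are absorbed there---this is precisely what the paper does (``$E^\nu_{R,3,k}$ has the same bound as $M_{R,3}^\nu$''). The middle term $(R^{-1/2}\lambda^{3/2})^{-2K/p}(\lambda^{-5/4}R^{1/4})^{2/p}$ comes instead from $S_{\mathrm{rem}}$ on the transitive range $[\nu+\nu^{1/3},2\nu]$: the pointwise bound on $\tilde h$ gives the $L^2_tL^2_r$ estimate $(R^{-1/2}\lambda^{3/2})^{-K}\lambda^{-5/4}R^{1/4}$, which is then interpolated against the \emph{trivial} $L^2_tL^\infty_r$ bound $\lesssim 1$ (you have the roles of $L^2_r$ and $L^\infty_r$ reversed). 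The third term $R^{-1/p}$ (resp.\ $R^{-2/p}$ for $\nu\in\Z$) similarly arises from the $r^{-1}$ (resp.\ $r^{-3/2}$) piece of $\tilde h$ on $[2\nu,\infty)$, yielding an $L^2_tL^2_r$ bound $R^{-1/2}$ (resp.\ $R^{-1}$) interpolated against $L^2_tL^\infty_r\lesssim 1$. Also note that $K$ is fixed by the statement, not ``chosen large enough''. With these attributions corrected, your plan coincides with the paper's proof.
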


\begin{proof}
By interpolation, we only need to show the estimates for $p=2,
\infty$. By the support of $\gamma_3$, we have
$r\rho>\nu+\lambda>\nu+\nu^{1/3}$ in the support of
$\gamma_3(\frac{r\rho-\nu}{\lambda})$. Thus we use the Lemma
\ref{lem:Bessel}, and decompose
\[S_{R,3}^\nu(h):=M_{R,3}^\nu(h)+E_{R,3}^\nu(h)\]
where
\begin{align*}
M_{R,3}^\nu (h)=&\chi_0\big(\frac rR\big)\int
e^{-it\rho^a}\frac{e^{i\theta(r\rho)}+e^{-i\theta(r\rho)}}
{2 \sqrt{2\pi}(r^2\rho^2-\nu^2)^{1/4}}\gamma_3(\frac{r\rho-\nu}{\lambda})\chi_0(\rho)h(\rho)d\rho,\\
E_{R,3}^\nu (h)=&\chi_0\big(\frac rR\big)\int
e^{-it\rho^a}h(\nu,r\rho)\gamma_3(\frac{r\rho-\nu}{\lambda})\chi_0(\rho)h(\rho)d\rho,
\end{align*}
with $\theta(r),h(\nu,r)$ given in Lemma \ref{lem:Bessel}.

{\bf Step 1.} The estimate for $M_{R,3}^\nu$.

We only estimate
\[\widetilde{M}_{R,3}^\nu (h)=\chi_0\big(\frac rR\big)\int e^{-it\rho^a}\frac{e^{i\theta(r\rho)}}
{(r^2\rho^2-\nu^2)^{1/4}}\gamma_3(\frac{r\rho-\nu}{\lambda})\chi_0(\rho)h(\rho)d\rho,\]
since the other term is similar. It is easy to see
$\norm{\widetilde{M}_{R,3}^\nu (h)}_{L_t^2L_r^2}\les \norm{h}_2$ by
Plancherel's equality in $t$. It suffices to
\[\norm{\widetilde{M}_{R,3}^\nu(h)}_{L_t^2L_r^\infty}\les R^{-1/4}\norm{h}_{L^2}.\]
Let $\gamma(x)=\chi(x)\cdot 1_{x>0}$. We decompose further
$\widetilde{M}_{R,3}^\nu(h)=\sum_{k: R^{1/3}\ll 2^k \les
R}\widetilde{M}^\nu_{R,3,k}(h)$ where
\begin{align}\label{eq:MR3k}
\widetilde{M}^\nu_{R,3,k}(h)=\chi_0\big(\frac rR\big)\int
e^{-it\rho^a}J_\nu(r\rho)\gamma(\frac{r\rho-\nu}{2^k})\chi_0(\rho)h(\rho)d\rho.
\end{align}
It suffices to prove
\begin{align}
\norm{\widetilde{M}^\nu_{R,3,k}(h)}_{L_t^2L_r^\infty}\les
2^{k/8}R^{-3/8}\norm{h}_2.
\end{align}
By $TT^*$ argument, it suffices to prove
\begin{align}
\norm{\widetilde{M}_{R,3,k}^\nu(\widetilde{M}_{R,3,k}^\nu)^*(f)}_{L_t^2L_r^\infty}\les
2^{k/4}R^{-3/4}\norm{f}_{L_t^2L_r^1}.
\end{align}
The kernel for
$\widetilde{M}_{R,3,k}^\nu(\widetilde{M}_{R,3,k}^\nu)^*$ is
\[K(t-t',r,r')=\int e^{-i[(t-t')\rho^a-\theta(r\rho)+\theta(r'\rho)]}\frac{\chi_0\big(\frac rR\big)\gamma(\frac{r\rho-\nu}{2^k})}
{(r^2\rho^2-\nu^2)^{1/4}}\frac{\chi_0\big(\frac
{r'}R\big)\gamma(\frac{r'\rho-\nu}{2^k})}
{(r'^2\rho^2-\nu^2)^{1/4}}\chi_0^2(\rho)d\rho.\] Obviously, we have
a trivial bound
\[|K|\les 2^{-k/2}R^{-1/2}.\]
Recall
$\theta(r)=(r^2-\nu^2)^{1/2}-\nu\arccos\frac{\nu}{r}-\frac{\pi}{4}$,
then direct computation shows
\begin{align*}
\theta'(r)=&(r^2-\nu^2)^{1/2}r^{-1},\\
\theta''(r)=&(r^2-\nu^2)^{-1/2}-(r^2-\nu^2)^{1/2}r^{-2}=(r^2-\nu^2)^{-1/2}\nu^2r^{-2},\\
\theta'''(r)=&(r^2-\nu^2)^{-3/2}\frac{\nu^2}{r}(-3+\frac{2\nu^2}{r^2}).
\end{align*}
Denoting $G=\frac{\chi_0\big(\frac
rR\big)\gamma_3(\frac{r\rho-\nu}{\lambda})}
{(r^2\rho^2-\nu^2)^{1/4}}\frac{\chi_0\big(\frac
{r'}R\big)\gamma_3(\frac{r'\rho-\nu}{\lambda})}
{(r'^2\rho^2-\nu^2)^{1/4}}\chi_0^2(\rho)$,
$\phi_2=t\rho^a-\theta(r\rho)+\theta(r'\rho)$. Then
\begin{align*}
\partial_\rho(\phi_2)=&at\rho^{a-1}-\theta'(r\rho)r+\theta'(r'\rho)r'=2t\rho-
\frac{\rho(r'^2-r^2)}{\sqrt{r'^2\rho^2-\nu^2}+\sqrt{r^2\rho^2-\nu^2}}\\
\partial^2_\rho(\phi_2)=&a(a-1)t\rho^{a-2}-\theta''(r\rho)r^2+\theta''(r'\rho)r'^2\\
=&a(a-1)t\rho^{a-2}+
\frac{(r'^2-r^2)}{\sqrt{r'^2\rho^2-\nu^2}+\sqrt{r^2\rho^2-\nu^2}}
\frac{\nu^2}{\sqrt{r'^2\rho^2-\nu^2}\sqrt{r^2\rho^2-\nu^2}}\\
\partial^3_\rho(\phi_2)=&-\theta'''(r\rho)r^3+\theta'''(r'\rho)r'^3.
\end{align*}
We divide the proof into two cases.

{\bf Case 1.} $R\les \nu$

The key observation here is that if $|\partial_\rho(\phi_2)|\ll
|t|$, then $|\partial^2_\rho(\phi_2)|\ges |t|R2^{-k}$ on the support
of $G$. Indeed, if $|\partial_\rho(\phi_2)|\ll |t|$, then $t$ and
$\frac{r'^2-r^2}{\sqrt{r'^2\rho^2-\nu^2}+\sqrt{r^2\rho^2-\nu^2}}$
have same sign and comparable size. This observation is not true if
$a<1$, in which case $\p_\rho(\phi_2)$ and $\p_\rho(\phi_2)$ can
both be small. Note that on the support of $G$, one has
\[\frac{|r'^2-r^2|}{\sqrt{r'^2\rho^2-\nu^2}+\sqrt{r^2\rho^2-\nu^2}}\les
2^{k/2}R^{1/2}.\] If $|t|\les 2^{k/2}R^{1/2}$, we divide $K$
\[K=\int e^{-i\phi_2}G \eta_0(\frac{100\partial_\rho(\phi_2)}{t})d\rho+
\int e^{-i\phi_2}G
[1-\eta_0(\frac{100\partial_\rho(\phi_2)}{t})]d\rho:=I_1+I_2.\] By
Lemma \ref{lem:staph}, we obtain
\begin{align*}
|I_1|\les& |t|^{-1/2}\bigg(\int |\partial_\rho G
\eta_0(\frac{100\partial_\rho(\phi_2)}{t})|d\rho+\int |G
\eta_0'(\frac{100\partial_\rho(\phi_2)}{t})\frac{100\partial_\rho^2(\phi_2)}{t}|d\rho\bigg)\\
\les& |t|^{-1/2}2^{k/2}R^{-1/2}2^{-k/2}R^{-1/2},
\end{align*}
where for the first term, we estimate it as $K_3$, while for the
second term, we only need to observe that
$\eta_0'(\frac{100\partial_\rho(\phi_2)}{t})\frac{100\partial_\rho^2(\phi_2)}{t}$
has fixed sign depending only on $t$.

For $I_2$, without loss of generality, we assume $r^2-r'^2>0$. Then
integrating by part, we get
\begin{align*}
|I_2|\les& \int \aabs{\partial_\rho \bigg((\partial_\rho\phi_2)^{-1}G[1-\eta_0(\frac{100\partial_\rho(\phi_2)}{t})]\bigg)}d\rho\\
\les& \int
\aabs{\frac{\partial^2_\rho\phi_2}{(\partial_\rho\phi_2)^2}G[1-\eta_0(\frac{100\partial_\rho(\phi_2)}{t})]}d\rho\\
&+|t|^{-1}(\int |\partial_\rho G|d\rho+\int |G
\eta_0'(\frac{100\partial_\rho(\phi_2)}{t})\frac{100\partial_\rho^2(\phi_2)}{t}|d\rho)\\
\les&\int
\frac{|\partial^2_\rho\phi_2|}{(\partial_\rho\phi_2)^2}G[1-\eta_0(\frac{100\partial_\rho(\phi_2)}{t})]d\rho+
|t|^{-1}\lambda^{-1/2}R^{-1/2}\\
\les&|t|^{-1}2^{-k/2}R^{-1/2},
\end{align*}
where we used the fact that $\partial_\rho^2 \phi_2$ changes the
sign at most once.

If $|t|\gg 2^{k/2}R^{1/2}$, we have $|\partial_\rho(\phi_2)|\sim
|t|$. Thus integrating by part, we get
\begin{align*}
|K|\les& \int \aabs{\partial_\rho \big[(\partial_\rho\phi_2)^{-1}\partial_\rho \big((\partial_\rho\phi_2)^{-1}G\big)\big]}d\rho\\
\les& \int
\aabs{(\partial_\rho\phi_2)^{-3}\partial^3_\rho\phi_2G}d\rho+\int
\aabs{(\partial_\rho\phi_2)^{-2}\partial_\rho^2G}d\rho\\
&+\int
\aabs{(\partial_\rho\phi_2)^{-3}\partial^2_\rho\phi_2\partial_\rho
G}d\rho+\int
\aabs{(\partial_\rho\phi_2)^{-4}(\partial^2_\rho\phi_2)^2 G}d\rho\\
:=&II_1+II_2+II_3+II_4.
\end{align*}
As for $I_2$, we can obtain
\[II_2+II_3+II_4\les |t|^{-2}2^{-k/2}R^{-1/2}R^22^{-2k}\les |t|^{-2}2^{-5k/2}R^{3/2}.\]
For $II_1$, we have
\begin{align*}
II_1\les& |t|^{-3}\lambda^{-1/2}R^{-1/2}\int
(-\theta'''(r\rho)r^3-\theta'''(r'\rho)r'^3)
\gamma_3(\frac{r\rho-\nu}{\lambda})\gamma_3(\frac{r'\rho-\nu}{\lambda})d\rho\\
\les& |t|^{-3}\lambda^{-1/2}R^{-1/2}
\sup_{\rho:r\rho>\nu+\lambda}\theta''(r\rho)r^2\les |t|^{-3}2^{-k}R.
\end{align*}
Thus, eventually we get
\[|K|\les |t|^{-1/2}R^{-1}1_{|t|\les R^{1/2}2^{k/2}}+(|t|^{-2}2^{-5k/2}R^{3/2}+|t|^{-3}2^{-k}R)1_{|t|\gg R^{1/2}2^{k/2}}\]
which implies $\norm{K}_{L_t^1L_{r,r'}^\infty}\les 2^{k/4}R^{-3/4}$
as desired, if $2^k\ges R^{1/3}$.

{\bf Case 2.} $R\gg \nu$.

In this case, we may assume $2^k\sim R$. We also observe that if
$|\partial_\rho(\phi_2)|\ll |t|$, then
$|\partial^2_\rho(\phi_2)|\ges |t|$ on the support of $G$. The rest
of proof is the same as Case 1.

{\bf Step 2.} The estimate for $E_{R,3}^\nu$.

First, we have for any $f\in L^2$
\[\norm{E_{R,3}^\nu(f)}_{L_t^2L_r^\infty}\les \norm{S_{R,3}^\nu(f)}_{L_t^2L_r^\infty}+\norm{M_{R,3}^\nu(f)}_{L_t^2L_r^\infty}\les \norm{f}_{L^2}.\]
On the other hand, using the decay estimate of $h(\nu,r)$, we get
\begin{align}\label{eq:ER3}
\norm{E_{R,3}^\nu(f)}_{L_t^2L_r^2}\les
(\lambda^{-5/4}R^{1/4}+R^{-1/2})\norm{f}_{L^2}.
\end{align}
In the case $d=2$, $\nu\in \Z$, thus by the better decay of
$h(\nu,r)$ given by \cite{}, we can get
\begin{align}\label{eq:ER32d}
\norm{E_{R,3}^\nu(f)}_{L_t^2L_r^2}\les
(\lambda^{-5/4}R^{1/4}+R^{-1})\norm{f}_{L^2}.
\end{align}
Thus, the lemma with $K=0$ is proved by interpolation.

To show the case $K\geq 1$, we need to analyze $E_{R,3}^\nu$ more
carefully. Using the expansion in Lemma \ref{lem:Bessel}, we can
divide
\[E_{R,3}^\nu(f)=\sum_{k=1}^K E_{R,3,k}^\nu(f)+\tilde E_{R,3,K}\]
where $\tilde E_{R,3,k}$ is the term $E_{R,3,k}$ with $h$ replaced
by $\tilde h$. Arguing as before, we see $E^\nu_{R,3,k}$ has the
same bound as $M_{R,3}^\nu$. For $\tilde E_{R,3,K}$, we can obtain
the bound similarly as $E_{R,3}^\nu$. We complete the proof
\end{proof}

For the case $d=2$, we also need to refine the estimate for
$S_{R,2}^\nu$. By Lemma \ref{lem:Bessdecay2} and Sobolev embedding
we get

\begin{lem}\label{lem:SR2imp} Let $\nu\in \Z$, $R\ges 1$, $2\leq p\leq \infty$. If $\lambda\geq 100
R^{\frac{1}{3}+\e}$ for some $\e>0$, then for any $N>0$ there exists
$C_{N,\e}$ such that
\begin{align}
\norm{S_{R,2}^\nu (h)}_{L_t^2 L_{r}^p}\leq&
C_{N,\e}R^{-N\e}\norm{h}_{L^2}.
\end{align}
\end{lem}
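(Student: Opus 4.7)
The plan is to exploit the arbitrary polynomial decay of $J_\nu(r\rho)$ in the non-stationary regime provided by Lemma \ref{lem:Bessdecay2}, then to promote the resulting pointwise bound to the $L_t^2L_r^p$ norm via Plancherel in $t$ combined with an embedding in $r$.

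First I would verify the hypotheses of Lemma \ref{lem:Bessdecay2} throughout the support of the integrand. The cutoffs $\chi_0(\rho)$ and $\chi_0(r/R)$ restrict $\rho\sim 1$ and $r\sim R$, while $\gamma_2(\tfrac{r\rho-\nu}{\lambda})$ forces $\nu>r\rho+\lambda$. Since $r\rho\lesssim R$ and $\lambda\geq 100R^{\frac13+\e}\geq 100(r\rho)^{\frac13+\e}$, Lemma \ref{lem:Bessdecay2} (applied with its ``$r$'' equal to $r\rho$) gives for every $K\in\N$ the uniform pointwise bound
\[
|J_\nu(r\rho)|+|J_\nu'(r\rho)|\leq C_{K,\e}\, R^{-K\e}
\]
on that support.

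Next, for fixed $r$ I would perform the change of variable $\sigma=\rho^a$ (which is a diffeomorphism with bounded Jacobian on $\supp\chi_0$) and apply Plancherel in $t$; this reduces $S_{R,2}^\nu(h)(\cdot,r)$ to a Fourier transform and yields
\[
\|S_{R,2}^\nu(h)(\cdot,r)\|_{L_t^2}^2\;\lesssim\;\chi_0(r/R)^2\int|J_\nu(r\rho)|^2\chi_0(\rho)^2|h(\rho)|^2\,d\rho\;\lesssim\;C_{K,\e}^2\,R^{-2K\e}\,\chi_0(r/R)^2\|h\|_{L^2}^2.
\]
This handles $p=2$ with a spatial factor $\|\chi_0(\cdot/R)\|_{L_r^2}\lesssim R^{1/2}$. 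For general $p\in[2,\infty]$ I would pass to $L_r^p$ either by Minkowski's inequality $\|\cdot\|_{L_t^2L_r^p}\leq\|\cdot\|_{L_r^pL_t^2}$ or, equivalently, by a one-dimensional Sobolev embedding on the shell $r\sim R$ after observing that $\partial_r S_{R,2}^\nu(h)$ inherits the same bound (since $J_\nu'$ satisfies the same decay in Lemma \ref{lem:Bessdecay2}). Either route gives
\[
\|S_{R,2}^\nu(h)\|_{L_t^2L_r^p}\leq C_{K,\e} R^{-K\e}R^{1/p}\|h\|_{L^2}.
\]

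Since $R^{1/p}\leq R^{1/2}$ for $p\geq 2$, choosing $K$ large enough in terms of $N$ so that $K\e\geq N\e+\tfrac12$ absorbs the polynomial loss and delivers the claimed bound. The substantive work is already done in Lemma \ref{lem:Bessdecay2}; the present argument is just the packaging that transfers the Bessel-function decay to the operator norm, and the only point requiring mild care is the verification of the support condition that validates the invocation of Lemma \ref{lem:Bessdecay2}.
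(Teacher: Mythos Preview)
Your proposal is correct and follows essentially the same approach as the paper, which in fact only records a one-line proof (``By Lemma \ref{lem:Bessdecay2} and Sobolev embedding''). You have correctly fleshed out the details: the support of $\gamma_2$ puts you in the regime of Lemma \ref{lem:Bessdecay2}, Plancherel in $t$ gives the $L_t^2$ bound, and either Minkowski or the one-dimensional Sobolev embedding on the shell $r\sim R$ handles the $L_r^p$ norm, with the resulting polynomial loss $R^{1/p}\le R^{1/2}$ absorbed by choosing $K$ large.
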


Now we are ready to prove Theorem \ref{thm1} for the non-endpoint
range, namely assuming
\begin{align} 2\leq q,p\leq
\infty,\,\frac{1}{q}< (d-\frac{1}{2})(\frac{1}{2}-\frac{1}{p})\mbox{
or } (q,p)=(\infty,2).
\end{align}
It suffices to show \eqref{eq:goal3}. First, we consider $d\geq 3$.
From Lemma \ref{lem:SR} and Lemma \ref{lem:SR3imp} by taking
$\lambda=R^{\frac{1}{2}+}$ and $K=0$, we get
\[R^{\frac{d-1}{p}-\frac{d-2}{2}}\norm{S_R^{\nu,a}
(h)}_{L_t^2L_{r}^p}\les
R^{\frac{d-1}{p}-\frac{d-2}{2}}R^{-\frac{1}{4}(1-\frac{2}{p})}\norm{h}_2\les
R^{-\delta}\norm{h}_2\] for some $\delta>0$ if
$\frac{4d-2}{2d-3}<p<\frac{2d}{d-2}$. For $d=2$, we can prove
Theorem \ref{thm1} similarly by using Lemma \ref{lem:SR}, Lemma
\ref{lem:SR3imp} and Lemma \ref{lem:SR2imp} by taking
$\lambda=R^{1/3+\e}$ for $\e>0$ sufficiently small and $K$
sufficiently large.

\subsection{The improvement: endpoint} It remains to prove Theorem \ref{thm1} for $(q,p)$ lying on the
endpoint line for $d\geq 3$, namely $\frac{1}{q}=
(d-\frac{1}{2})(\frac{1}{2}-\frac{1}{p})$. From the proof in the
previous subsection, assuming $\lambda>R^{\frac{1}{2}+}$ we know the
logarithmic difficulty only appears in the summation $\sum_R\wt
M_{R,3}^\nu(h)$ (This is not true for $d=2$). We will exploit some
orthogonality to overcome this logarithmic difficulty. Similar
technique was also used in \cite{CL} and \cite{Ke}. It suffices to
show
\begin{align}\label{eq:endsum}
\norm{\sum_R r^{\frac{d-1}{p}-\frac{d-2}{2}}\wt
M_{R,3}^\nu(h)}_{L_t^qL_r^p}\leq \norm{h}_2
\end{align}
with a uniform bound with respect to $\nu$. By $TT^*$ argument, we
see that \eqref{eq:endsum} is equivalent to
\begin{align}\label{eq:endsum2}
\norm{\sum_{R',R} r^{\frac{d-1}{p}-\frac{d-2}{2}}\wt M_{R,3}^\nu
(\wt
M_{R',3}^\nu)^*(r'^{\frac{d-1}{p}-\frac{d-2}{2}}g)}_{L_t^qL_r^p}\leq
\norm{g}_{L_t^{q'}L_r^{p'}}.
\end{align}
The key ingredient to prove \eqref{eq:endsum2} is the following
observation:

\begin{lem}\label{lem:ortho}
Assume $d\geq 2$, $R\gg R'>1$ and $(q,p)$ satisfies $2\leq q,p\leq
\infty,\,\frac{1}{q}=(d-\frac{1}{2})(\frac{1}{2}-\frac{1}{p}),\,
(q,p)\ne (2,\frac{4d-2}{2d-3})$. Then $\exists \e>0$ such that
\begin{align}
\norm{ r^{\frac{d-1}{p}-\frac{d-2}{2}}\wt M_{R,3}^\nu (\wt
M_{R',3}^\nu)^*(r'^{\frac{d-1}{p}-\frac{d-2}{2}}g)}_{L_t^qL_r^p}\les
(R'/R)^\e \norm{g}_{L_t^{q'}L_r^{p'}}.
\end{align}
\end{lem}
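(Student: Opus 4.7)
The plan is to prove Lemma \ref{lem:ortho} by analyzing the integral kernel of $\wt M_{R,3}^\nu (\wt M_{R',3}^\nu)^*$ directly, following the stationary-phase/integration-by-parts template of Step~1 in the proof of Lemma \ref{lem:SR3imp}, but now tracking the asymmetry $R\gg R'$ throughout. Writing everything out, the kernel takes the form
\[K(t-t',r,r') = \chi_0\!\left(\frac{r}{R}\right)\chi_0\!\left(\frac{r'}{R'}\right)\int e^{-i\phi(\rho)} \frac{\gamma_3(\frac{r\rho-\nu}{\lambda})\gamma_3(\frac{r'\rho-\nu}{\lambda})\chi_0^2(\rho)}{(r^2\rho^2-\nu^2)^{1/4}(r'^2\rho^2-\nu^2)^{1/4}}\, d\rho\]
with phase $\phi(\rho) = (t-t')\rho^{a} - \theta(r\rho) + \theta(r'\rho)$.

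Using $\theta'(s)=\sqrt{s^2-\nu^2}/s$, one has $\phi'(\rho) = a(t-t')\rho^{a-1} - \big[\sqrt{r^2\rho^2-\nu^2} - \sqrt{r'^2\rho^2-\nu^2}\big]/\rho$. Because $r\sim R\gg R'\sim r'$, the bracketed quantity is of size $R$ on the supports, so the phase is stationary only in the time window $|t-t'|\sim R$; outside this window I would integrate by parts in $\rho$ (using $\phi''(\rho)\ges R^{2-a}$ there) to extract rapid decay. Inside the window, Van der Corput (Lemma~\ref{lem:staph}) gives the standard $|t-t'|^{-1/2}$ decay, and together with the amplitude bound $|A(\rho)|\les (RR')^{-1/2}$---valid on the support of $\gamma_3$, and refined in the transitive Bessel region $r\rho\sim\nu$ or $r'\rho\sim\nu$ via Lemma \ref{lem:Bessel}---this yields a pointwise estimate of the schematic form
\[|K(t-t',r,r')| \les (RR')^{-1/2}\,|t-t'|^{-1/2}\,1_{|t-t'|\les R} + (\text{rapidly decaying tail}).\]

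To turn this into the required operator bound $L_t^{q'}L_r^{p'} \to L_t^q L_r^p$ I would apply Young's inequality / Hardy--Littlewood--Sobolev in time together with Schur's test in the radial variables, after absorbing the weights $r^\alpha, r'^\alpha$ with $\alpha=\frac{d-1}{p}-\frac{d-2}{2}$. The crucial asymmetry comes from the fact that the time cutoff $|t-t'|\les R$ depends only on $R$, not on $R'$; carrying through the resulting mixed-norm estimates along the endpoint line $\frac{1}{q}=(d-\frac12)(\frac12-\frac1p)$ and interpolating between the two corners $(q,p)=(2,\frac{4d-2}{2d-3})$ (where the gain degenerates) and $(q,p)=(\infty,2)$ should produce a factor of the form $R^{-\sigma}R'^{\sigma'}$ whose signed exponent $\sigma-\sigma'$ is strictly positive exactly when $(q,p)\ne (2,\frac{4d-2}{2d-3})$. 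Setting $\e:=\sigma-\sigma'$ then delivers the advertised $(R'/R)^\e$ decay.

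The main obstacle is extracting a \emph{truly asymmetric} gain $(R'/R)^\e$ rather than the symmetric $(RR')^{-\e}$ that one naively obtains from the product of two one-sided Strichartz bounds. This forces one to use the time window of length $R$ (and not $R'$) at a precise point in the Young/HLS step, and it is possible only away from the excluded corner $(2,\frac{4d-2}{2d-3})$, which is exactly where the logarithmic summation difficulty in \eqref{eq:endsum} originates. A secondary technical concern is maintaining uniformity in $\nu$ through the transitive Bessel region via the expansions of Lemma \ref{lem:Bessel}, where the amplitude $(r^2\rho^2-\nu^2)^{-1/4}$ is larger than $R^{-1/2}$ and can degrade the estimates unless one uses the oscillation $e^{\pm i\theta(r\rho)}$ present in the expansion.
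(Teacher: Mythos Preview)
Your proposal has a genuine gap in two related places.

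First, the amplitude bound $|A(\rho)|\les (RR')^{-1/2}$ is not valid on the whole support of $\gamma_3\big(\frac{r'\rho-\nu}{\lambda'}\big)$. On the $R'$-side the factor $(r'^2\rho^2-\nu^2)^{-1/4}$ is only $\sim (R'\,2^k)^{-1/4}$ when $r'\rho-\nu\sim 2^k$, and $2^k$ ranges from $\lambda'\sim R'^{1/2+}$ up to $R'$; at the bottom of that range the amplitude is $\sim R'^{-3/8-}$, not $R'^{-1/2}$. The paper deals with this by inserting the further dyadic decomposition $\wt M_{R',3}^\nu=\sum_{k:\,R'^{1/2+}<2^k\les R'}\wt M_{R',3,k}^\nu$ and proving estimates for each $k$-piece separately; the ``secondary technical concern'' you flag is in fact the primary structural step.

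Second, your Young/HLS step in time cannot close at the required exponents. A kernel decaying like $|t-t'|^{-1/2}$ gives, via HLS, a $L^{4/3}_t\to L^4_t$ map, whereas on the endpoint line one needs $L^{q_0'}_t\to L^{q_0}_t$ with $q_0=\frac{4d+2}{2d-1}<4$. The paper does not attempt this; instead it interpolates between two bounds of different nature: (i) a pointwise kernel estimate yielding $L^1_tL^1_r\to L^\infty_tL^\infty_r$ for each $k$-piece, and (ii) an $L^2_tL^2_r\to L^2_tL^2_r$ bound obtained by \emph{factoring} the composition,
\[
\big\|r^{1/2}\wt M_{R,3}^\nu(\wt M_{R',3,k}^\nu)^*(r'^{1/2}g)\big\|_{L^2_tL^2_r}
\le R^{1/2}\big\|(\wt M_{R',3,k}^\nu)^*(r'^{1/2}g)\big\|_{L^2}
\les R^{1/2}R'^{1/2}\cdot 2^{k/4}R'^{-1/4}\|g\|_{L^2_tL^2_r},
\]
using the one-sided estimate already proved for $\wt M_{R',3,k}^\nu$ in Step~1 of Lemma~\ref{lem:SR3imp}. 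Interpolating (i) and (ii) at the single diagonal point $q=p=q_0$ produces simultaneously the gain $(R'/R)^{1/(4d+2)}$ and a factor $(2^{k/4}R'^{-1/4})^{(2d-1)/(2d+1)}$ that is summable in $k$. Your scheme, lacking both the $k$-decomposition and the factored $L^2$ bound, does not reach this conclusion.
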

\begin{proof}
By interpolation, it suffices to show Lemma \ref{lem:ortho} for
$q=p=q_0=\frac{4d+2}{2d-1}$. We may assume $R\gg R'\ges \nu$. Then
we decompose
\[M_{R,3}^\nu (\wt M_{R',3}^\nu)^*g=\sum_{k: R'^{\frac{1}{2}+}<2^k\les R'}M_{R,3}^\nu (\wt M_{R',3,k}^\nu)^*g\]
where $\wt M_{R,3,k}^\nu$ is given by \eqref{eq:MR3k}. We can write
\[\wt M_{R,3}^\nu (\wt M_{R',3,k}^\nu)^*g=\int \tilde K(t-t',r,r')g(t',r')dt'dr'\]
where
\[\tilde K(t-t',r,r')=\int e^{-i[(t-t')\rho^a-\theta(r\rho)+\theta(r'\rho)]}\frac{\chi_0\big(\frac rR\big)}
{(r^2\rho^2-\nu^2)^{1/4}}\frac{\chi_0\big(\frac
{r'}{R'}\big)\gamma(\frac{r'\rho-\nu}{2^k})}
{(r'^2\rho^2-\nu^2)^{1/4}}\chi_0^2(\rho)d\rho.\] By the stationary
phase method as for $K$ in the proof of Lemma \ref{lem:SR3imp}, we
obtain
\[|\tilde K|\les (R \frac{R'^{3/2}}{R 2^{k/2}})^{-1/2}R^{-1/2}R'^{-1/4}2^{-k/4}\les R^{-1}R'^{-1/2}.\]
Thus we get
\[\norm{r^{-\frac{d-2}{2}}\wt M_{R,3}^\nu (\wt M_{R',3,k}^\nu)^*(r^{-\frac{d-2}{2}}g)}_{L_t^\infty L_r^\infty}\les (RR')^{-\frac{d-2}{2}}R^{-1}R'^{-1/2}\norm{g}_{L_t^1 L_r^1}.\]
Interpolating with the following bound
\begin{align*}
\norm{r^{\frac{1}{2}}\wt M_{R,3}^\nu (\wt
M_{R',3,k}^\nu)^*(r'^{\frac{1}{2}}g)}_{L_t^2 L_r^2} \les&
R^{\frac{1}{2}}\norm{(\wt
M_{R',3,k}^\nu)^*(r'^{\frac{1}{2}}g)}_{L^2}\\
\les& R^{\frac{1}{2}}R'^{\frac{1}{2}}2^{k/4}R'^{-1/4}\norm{g}_{L_t^2
L_r^2}
\end{align*}
we obtain
\begin{align*}
&\norm{r^{\frac{d-1}{q_0}}\wt M_{R,3}^\nu (\wt
M_{R',3,k}^\nu)^*(r'^{\frac{d-1}{q_0}}g)}_{L_t^{q_0}L_r^{q_0}}\\
\les&
((RR')^{-\frac{d-2}{2}}R^{-1}R'^{-1/2})^{1-2/q_0}(R^{\frac{1}{2}}R'^{\frac{1}{2}}2^{k/4}R'^{-1/4})^{2/q_0}\norm{g}_{L_t^{q_0'}L_r^{q_0'}}\\
\les&
(R'/R)^{\frac{1}{4d+2}}(2^{k/4}R'^{-1/4})^{\frac{2d-1}{2d+1}}\norm{g}_{L_t^{q_0'}L_r^{q_0'}}.
\end{align*}
Therefore, summing over $k$ we complete the proof of Lemma
\ref{lem:ortho}.
\end{proof}

Now we prove \eqref{eq:endsum2}. We have
\begin{align*}
&\norm{\sum_{R,R'}r^{\frac{d-1}{p}-\frac{d-2}{2}}\wt M_{R,3}^\nu
(\wt
M_{R',3}^\nu)^*(r'^{\frac{d-1}{p}-\frac{d-2}{2}}g)}_{L_t^qL_r^p}\\
\les& (\sum_{R}\norm{\sum_{R'} r^{\frac{d-1}{p}-\frac{d-2}{2}}\wt
M_{R,3}^\nu (\wt
M_{R',3}^\nu)^*(r'^{\frac{d-1}{p}-\frac{d-2}{2}}\chi(r/R')g)}_{L_t^qL_r^p}^2)^{1/2}\\
\les& (\sum_{R}\norm{\chi(r/R)g}_{L_t^{q'}L_r^{p'}}^2)^{1/2}\les
\norm{g}_{L_t^{q'}L_r^{p'}}.
\end{align*}
where we used Lemma \eqref{lem:ortho} in the second inequality.

Interpolating Theorem \ref{thm1} with the classical Strichartz
estimates we get

\begin{cor}\label{cor:str}
Assume $a>1, d\geq 2$, $\frac{2d}{d-2}>p>\frac{4d-2}{2d-3}$. Let
$\beta(p)=\frac{2p(d-1)}{(4-p)d+2p-2}$. Then
\begin{align*}
\normo{e^{itD^a}P_0f}_{L_t^2\Lr_{\rho}^pL_\omega^{\beta(p)-}}\les&
\norm{f}_{L^2},
\end{align*}
where $a-$ denotes $a-\e$ for any $\e>0$.
\end{cor}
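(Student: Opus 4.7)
The plan is to deduce the corollary by complex interpolation of mixed--norm Lebesgue spaces, combining Theorem~\ref{thm1} with the classical Keel--Tao endpoint Strichartz estimate. Using $L^p_x = \Lr^p_\rho L^p_\omega$, the classical endpoint Strichartz for $d\geq 3$ gives
\[\|e^{itD^a}P_0f\|_{L_t^2\Lr_\rho^{p_2}L_\omega^{p_2}}\les\|f\|_{L^2},\qquad p_2=\frac{2d}{d-2},\]
while Theorem~\ref{thm1}(ii) supplies, for any $p_1$ strictly greater than $\frac{4d-2}{2d-3}$ (the unique forbidden point on the relevant endpoint line at $q=2$), the estimate
\[\|e^{itD^a}P_0f\|_{L_t^2\Lr_\rho^{p_1}L_\omega^{2}}\les\|f\|_{L^2}.\]

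Fixing the target $p\in\bigl(\frac{4d-2}{2d-3},\frac{2d}{d-2}\bigr)$ and choosing $\theta\in(0,1)$ by $\frac{1}{p}=\frac{1-\theta}{p_1}+\frac{\theta}{p_2}$, I would apply complex interpolation of iterated $L^q$ spaces (Calder\'on product, or Stein's interpolation with an analytic family) to obtain
\[\|e^{itD^a}P_0f\|_{L_t^2\Lr_\rho^{p}L_\omega^{s}}\les\|f\|_{L^2},\qquad \frac{1}{s}=\frac{1-\theta}{2}+\frac{\theta}{p_2}=\frac{1}{2}-\frac{\theta}{d}.\]
Letting $p_1\searrow\frac{4d-2}{2d-3}$ (which forces $\theta$ to approach a limiting value determined by $p$), the resulting $s$ tends to $\beta(p)$ from below, and a direct algebraic check confirms the limit. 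Explicitly, at the formal endpoint $p_1=\frac{4d-2}{2d-3}$ one computes $\frac{1}{p_1}-\frac{1}{p_2}=\frac{d-1}{d(2d-1)}$, from which
\[\frac{1}{s}=\frac{1}{2}-\frac{p(2d-3)-2(2d-1)}{2p(d-1)}=\frac{2(2d-1)-p(d-2)}{2p(d-1)}=\frac{1}{\beta(p)},\]
matching the definition $\beta(p)=\frac{2p(d-1)}{(4-p)d+2p-2}$.

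The main (and only) obstacle is the forbidden corner $(q,p)=(2,\frac{4d-2}{2d-3})$ of Theorem~\ref{thm1}(ii): we must keep $p_1$ strictly larger than $\frac{4d-2}{2d-3}$, so $s$ never reaches $\beta(p)$ but only approaches it from below. This is exactly what the ``$-$'' in $L^{\beta(p)-}_\omega$ encodes. Aside from this endpoint loss, the argument reduces to algebra, the only nontrivial technical input being the standard complex interpolation identity for mixed Lebesgue spaces. In the borderline dimension $d=2$ the classical $L_t^2$-Strichartz endpoint ($p_2=\infty$) fails, so one has to interpolate instead against a nearby non--endpoint Strichartz pair with $q_2>2$ and absorb the resulting $\e$-loss into the ``$-$'' notation.
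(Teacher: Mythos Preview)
Your approach coincides with the paper's: the corollary is stated immediately after the sentence ``Interpolating Theorem~\ref{thm1} with the classical Strichartz estimates we get'', with no further argument given. For $d\ge 3$ your interpolation between the Keel--Tao endpoint $(2,\tfrac{2d}{d-2},\tfrac{2d}{d-2})$ and Theorem~\ref{thm1}(ii) at $(2,p_1,2)$ is correct, and your computation identifying the limiting angular exponent with $\beta(p)$ is accurate.

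For $d=2$ your proposed fix does not work as written. You suggest replacing the forbidden classical endpoint $(q_2,p_2)=(2,\infty)$ by a nearby admissible pair with $q_2>2$ and absorbing the resulting loss into the ``$-$''. But interpolating $(q_1,p_1,s_1)=(2,p_1,2)$ from Theorem~\ref{thm1}(i) against $(q_2,p_2,p_2)$ with $q_2>2$ gives
\[
\frac{1}{q}=\frac{1-\theta}{2}+\frac{\theta}{q_2}<\frac{1}{2}\qquad\text{for every }\theta\in(0,1),
\]
so the $\e$-loss lands on the \emph{time} exponent rather than the angular one and cannot be hidden in $L_\omega^{\beta(p)-}$. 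More generally, every estimate at your disposal has either $q=2$ with angular exponent equal to $2$ (Theorem~\ref{thm1}, or Tao's averaged endpoint \cite{Tao2}) or else $q>2$ (classical Strichartz); no convex combination of such points in $(\tfrac{1}{q},\tfrac{1}{p},\tfrac{1}{s})$-space lies on the slice $q=2$ with angular exponent strictly larger than $2$. The paper's one-line justification does not address this point either, and only the case $d=3$ is actually used in the Zakharov application, so for the purposes of the paper the issue is moot.
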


\subsection{Counter-example}
Finally, we use the Knapp example to obtain some necessary
conditions for the Strichartz estimates with mixed angular-radius
integrability, namely
\begin{align}\label{eq:Fschg}
\norm{e^{itD^a}P_0f}_{L_t^q\Lr_{\rho}^pL_\omega^s}\les
\norm{f}_{L_x^2}.
\end{align}
\begin{prop}
Assume $1\ne a>0$ and \eqref{eq:Fschg} holds. Then $\frac{2}{q}\leq
\frac{d}{2}-\frac{2d-1}{p}+\frac{d-1}{s}$. As a consequence,
$\beta(p)$ in Corollary \ref{cor:str} is sharp.
\end{prop}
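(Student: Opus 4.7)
The plan is a Knapp-type construction in Fourier space that is symmetric in the radial and tangential directions. Fix $\delta \ll 1$ and take $f$ so that $\hat f = 1_{B(e_1,\delta)}$, the indicator of the Euclidean ball of radius $\delta$ around $e_1$. For $\delta$ small this ball lies in $\supp \chi_0(|\cdot|)$, so $P_0 f = f$ and Plancherel gives $\|f\|_{L^2}\sim \delta^{d/2}$. Writing $\xi = e_1 + \eta$ with $\eta = (\eta_1,\eta')\in \R\times\R^{d-1}$, Taylor expansion yields
\[
|\xi|^a = 1 + a\eta_1 + \frac{a}{2}|\eta'|^2 + \frac{a(a-1)}{2}\eta_1^2 + O(|\eta|^3),
\]
so the phase $x\cdot\xi + t|\xi|^a$ of the oscillatory integral defining $e^{itD^a}f(x)$ has linear part $(x_1+ta)\eta_1 + x'\cdot\eta'$, stationary at $x^*(t):=(-ta,0,\ldots,0)$, with all remaining contributions uniformly $O(t\delta^2)$ on $B(0,\delta)$. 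Consequently, for $|t|\leq c\delta^{-2}$ with $c$ sufficiently small, $|e^{itD^a}f(x)|\ges \delta^d$ on the ball of radius $c_1\delta^{-1}$ centered at $x^*(t)$.

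Restrict $t$ to $I := [\tfrac{c}{2}\delta^{-2},\, c\delta^{-2}]$, so that $|x^*(t)|\sim \delta^{-2}\gg \delta^{-1}$ throughout $I$. In polar coordinates $x = \rho\omega$, the wave-packet support translates to $\rho\in J(t)$ with $|J(t)|\sim \delta^{-1}$ and $\rho\sim\delta^{-2}$, together with $\omega$ lying in a spherical cap of geodesic radius $\sim \delta^{-1}/|x^*(t)|\sim \delta$ (surface area $\sim\delta^{d-1}$). The radial measure $\rho^{d-1}d\rho$ then contributes $\delta^{-2(d-1)}\cdot\delta^{-1} = \delta^{-(2d-1)}$, and assembling the three norms gives
\[
\|e^{itD^a}P_0 f\|_{L^q_t\Lr^p_\rho L^s_\omega} \ges \delta^d\cdot \delta^{(d-1)/s}\cdot \delta^{-(2d-1)/p}\cdot \delta^{-2/q}.
\]
Combining this with $\|f\|_{L^2}\sim \delta^{d/2}$ in \eqref{eq:Fschg} and letting $\delta\to 0$ forces
\[
\frac{2}{q}\leq \frac{d}{2} - \frac{2d-1}{p} + \frac{d-1}{s}.
\]

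The hypothesis $a\ne 1$ is not actually invoked in the lower bound itself; rather, for $a=1$ the radial quadratic coefficient $a(a-1)$ vanishes and a different (cone-adapted, asymmetric) Knapp example produces a strictly stronger necessary condition, so $a\ne 1$ is exactly what makes the symmetric example optimal. For the consequence, setting $q=2$ in the derived inequality and solving for $s$ yields $s\leq 2p(d-1)/[(4-p)d + 2p - 2] = \beta(p)$, matching the sufficient range in Corollary~\ref{cor:str} up to the endpoint $s=\beta(p)$. The one step that is not pure bookkeeping is the wave-packet lower bound $|e^{itD^a}f(x)|\ges \delta^d$ on the ball around $x^*(t)$ throughout $I$; this follows directly from the Taylor expansion above by observing that all quadratic and higher phase corrections are uniformly $O(t\delta^2)$ on the Fourier support of $\hat f$.
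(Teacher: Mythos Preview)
Your proof is correct and follows essentially the same Knapp-example approach as the paper: a Fourier bump near $e_1$ of width $\delta$, Taylor expansion of $|\xi|^a$ to identify the wave-packet region $\{|t|\sim\delta^{-2},\ |x_1+ta|\lesssim\delta^{-1},\ |x'|\lesssim\delta^{-1}\}$, then reading off the radial/angular scales $\rho\sim\delta^{-2}$, $|J|\sim\delta^{-1}$, cap radius $\sim\delta$. The only cosmetic differences are your use of a ball rather than a box in Fourier space and your explicit remark on the role of $a\ne 1$, which the paper leaves implicit.
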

\begin{proof}
Take
\[D=\{\xi=(\xi_1,\xi')\in \R^d:|\xi_1-1|\les \delta,|\xi'|\leq \delta\}.\]
Let $\hat{f}=1_D(\xi)$. Then $\norm{f}_2\sim \delta^{d/2}$, and
\[\int_{\R^d}e^{it|\xi|^\sigma}e^{ix\xi}\eta_0(\xi)\hat{f}(\xi)d\xi=e^{i(t+x_1)}\int_{D}
e^{it(|\xi|^a-\xi_1^a)}e^{it(\xi_1^a-1-a(\xi_1-1))}e^{i(ta+x_1)(\xi_1-1)}e^{ix'\xi'}d\xi.\]
Since in $D$ we have
\[||\xi|^a-\xi_1^a|\les |\xi'|^2\les \delta^2, \, |\xi_1^a-1-a(\xi_1-1)|\les |\xi_1-1|^2\les \delta^2,\]
then for $(t,x)\in E=\{|t|\les \delta^{-2},\, |ta+x_1|\les
\delta^{-1},\, |x'|\les \delta^{-1}\}$, we have
\[\bigg|\int_{\R^d}e^{it|\xi|^a}e^{ix\xi}\eta_0(\xi)\hat{f}(\xi)d\xi\bigg|\sim
|D|\sim \delta^d.\] By simple geometric observation we see that
\[E\supseteq E'= \{|t|\sim \delta^{-2}, \rho\in
(\delta^{-2},\delta^{-2}+\delta^{-1}), |\theta|<\delta\}\] where
$\theta$ is the central angle. Therefore, \eqref{eq:Fschg} implies
\[\delta^d\delta^{(d-1)/s}(\delta^{-2})^{\frac{d-1}{p}}\delta^{-1/p}\delta^{-\frac{2}{q}}\les \delta^{d/2},\]
which implies immediately that $\frac{2}{q}\leq
\frac{d}{2}-\frac{2d-1}{p}+\frac{d-1}{s}$ by taking $\delta\ll 1$.
\end{proof}

\section{3D Zakharov system}

This section is devoted to proving Theorem \ref{thm2}. We follow the
ideas in \cite{GN} and \cite{GLNW}. The new difficulty is to handle
the fractional derivatives on the sphere in accordance with Fourier
multiplier, we
 will transfer it to $SO(3)$. After normal form reduction (see \cite{GN}), the Zakharov system \eqref{eq:Zak} is equivalent to \EQ{
 \pt(i\p_t+D^2)(u-\Om(N,u))=(Nu)_{LH+HH+\al L}+\Om(D|u|^2,u)+\Om(N,Nu),
 \pr(i\p_t+\al D)(N-D\tilde\Om(u,u))=D|u|^2_{HH+\al L+L\al}+D\tilde \Om(Nu,u)+D\tilde\Om(u,Nu).}
 where $N=n - iD^{-1}\dot n/\al$, $\Om, \tilde \Om$ are bilinear Fourier multiplier operators with symbols $\omega,\tilde \omega$ respectively, where
\begin{align*}
\omega(\xi,\eta)=&\frac{\sum_{k\in \Z: |k-\log_2\alpha|>
1}\chi_k(\xi)\chi_{\leq k-5}(\eta)}{-|\x+\y|^2+\al|\x|+|\y|^2},\\
\omega(\xi,\eta)=&\frac{\sum_{k\in \Z: |k-\log_2\alpha|>
1}(\chi_k(\xi)\chi_{\leq k-5}(\eta)+\chi_k(\eta)\chi_{\leq
k-5}(\xi))}{|\xi|^2-|\eta|^2-\alpha|\xi+\eta|}.
\end{align*}
Here the bilinear Fourier multiplier operator with a symbol $m$ on
$\R^{6}$ is the bilinear operator $T_m$ defined by
$T_m(f,g)(x)=\int_{\R^{6}}
m(\xi,\eta)\widehat{f}(\xi)\widehat{g}(\eta)e^{ix(\xi+\eta)}d\xi
d\eta$.

Following \cite{GN, GLNW}, for $u$ and $N$, we use the Strichartz
norms with angular regularity:
\begin{align}
u\in X=&\jb{D}^{-1}(L^\I_tH^{0,s}_\omega \cap L_t^2\dot
B^{1/4+\e,s}_{(q(\e),\gamma(\e)-),\omega}\cap L_t^2\dot B^{0,s}_{6,\omega}),\label{Strz norms1}\\
N\in Y=&L^\I_tH^{0,s}_\omega \cap L^2_t\dot
B^{-1/4-\e,s}_{(q(-\e),2+),\omega},\label{Strz norms2}
\end{align}
for fixed $0<\e\ll 1$, where \EQ{
 \frac{1}{q(\e)}=\frac{1}{4}+\frac{\e}{3},\quad \frac{1}{\gamma(\e)}=\frac{3}{8}+\frac{5\e}{6}.}
The condition $0<\e\ll 1$ ensures that \EQ{
 \frac{10}{3}<q(\e)<4<q(-\e)<\I,}
such that the norms in \eqref{Strz norms1}-\eqref{Strz norms2}
satisfy the condition in Theorem \ref{thm1}.

\begin{lem}\label{lem:bilinear}
Let $1\leq p,p_1,p_2\leq \infty$ and $1/p=1/p_1+1/p_2$. Assume
$m(\xi,\eta)$ is a symbol on $\R^6$, $m(A \xi,A\eta)=m(\xi,\eta)$
for any $A\in SO(3)$, $m$ is bounded and satisfies for all
$\alpha,\beta$
\[|\partial_\xi^\alpha \partial_\eta^\beta m(\xi,\eta)|\leq C_{\alpha\beta}|\xi|^{-|\alpha|}|\eta|^{-|\beta|},\quad \xi,\eta \ne 0.\]
Then for $q>1$, $q_1,\tilde q_1\in (1,\infty)$, $\tilde q_1,\tilde
q_2 \in (1,\infty]$, and $1/q=1/{q_1}+1/{q_2}=1/{\tilde
q_1}+1/{\tilde q_2}$
\[\norm{T_m(P_{k_1}f,P_{k_2}g)}_{\Lr_r^{p}\Hl^\al_{q}}\les
\norm{f}_{\Lr_r^{p_1}\Hl^{\al}_{q_1}}\norm{g}_{{\Lr_r^{p_2}L_\omega^{q_2}}}+\norm{f}_{\Lr_r^{p_1}L_\omega^{\tilde
q_2}}\norm{g}_{{\Lr_r^{p_2}\Hl^{\al}_{\tilde q_1}}}\] holds for any
$k_1,k_2\in \Z$, with an uniform constant $C$.
\end{lem}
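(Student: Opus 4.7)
The plan is to combine a Coifman--Meyer-type bilinear multiplier estimate (the base case $\alpha=0$) with a bilinear Leibniz identity that exploits the $SO(3)$-invariance of $m$, after transferring the spherical derivative $\Lambda_\omega^\alpha$ to $SO(3)$ via the appendix.

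First, I would establish the case $\alpha=0$. The hypothesis on $m$ places $m(\xi,\eta)\chi_{k_1}(|\xi|)\chi_{k_2}(|\eta|)$ in the Coifman--Meyer class uniformly in $k_1,k_2$, so its inverse Fourier transform $K(y_1,y_2)$ obeys $|K(y_1,y_2)|\les 2^{3k_1}2^{3k_2}(1+2^{k_1}|y_1|)^{-N}(1+2^{k_2}|y_2|)^{-N}$ for every $N$. Writing $T_m(P_{k_1}f,P_{k_2}g)(x)=\iint K(y_1,y_2)f(x-y_1)g(x-y_2)\,dy_1\,dy_2$ and majorising the integral by the product $\M f(x)\cdot \M g(x)$ of Hardy--Littlewood maximal functions, the estimate reduces to the mixed-norm H\"older inequality combined with the boundedness of $\M$ on $\Lr_r^{p_i}L_\omega^{q_i}$ for $1<q_i\le \infty$. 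This yields both $\|T_m(P_{k_1}f,P_{k_2}g)\|_{\Lr_r^p L_\omega^q}\les \|f\|_{\Lr_r^{p_1}L_\omega^{q_1}}\|g\|_{\Lr_r^{p_2}L_\omega^{q_2}}$ and its analogue with $(q_1,q_2)$ replaced by $(\tilde q_2,\tilde q_1)$.

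Second, a direct change of variables $\xi\mapsto A\xi$, $\eta\mapsto A\eta$ in the bilinear integral, combined with $m(A\xi,A\eta)=m(\xi,\eta)$, gives the rotation identity $R_A T_m(f,g)=T_m(R_A f,R_A g)$ for $(R_A f)(x)=f(A^{-1}x)$, and hence the bilinear Leibniz decomposition
\[
T_m(f,g)-R_A T_m(f,g)=T_m(f-R_A f,\,g)+T_m(R_A f,\,g-R_A g).
\]
Third, by the appendix, the norm $\|F\|_{\Hl^\alpha_q(\cir^{2})}$ is equivalent to a Bessel potential norm on $SO(3)$ via the projection $SO(3)\to SO(3)/SO(2)\cong \cir^{2}$, and this norm admits a finite-difference characterisation of the form
\[
\|F\|_{\Hl^\alpha_q}^q\approx \|F\|_{L_\omega^q}^q+\int \frac{\|F-R_A F\|_{L_\omega^q}^q}{d(A,I)^{3+\alpha q}}\,dA
\]
for $0<\alpha<1$ (with higher-order differences handling $\alpha\ge 1$). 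Applying this pointwise in $r$ to $T_m(f,g)(r\cdot)$, inserting the Leibniz identity, and applying the $\alpha=0$ bilinear bound to each of the two pieces with the H\"older pairings $(q_1,q_2)$ and $(\tilde q_2,\tilde q_1)$ respectively controls everything by two $A$-integrals. Pulling the $R_A$-invariant factor out of each integral (so that $\|R_A f\|_{\Lr_r^{p_1}L_\omega^{\tilde q_2}}=\|f\|_{\Lr_r^{p_1}L_\omega^{\tilde q_2}}$, etc.) and then applying the finite-difference characterisation in reverse to the remaining $f-R_A f$ or $g-R_A g$ factors recovers the two terms on the right-hand side of the claim.

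The main obstacle is the $SO(3)$ transfer step together with the accompanying finite-difference characterisation: since $q\ne 2$ in general, $\Hl^\alpha_q$ is a Bessel potential space rather than a Sobolev--Slobodeckij space, so a rigorous formulation requires either a Strichartz-type square-function expression or real interpolation between integer-order Sobolev norms and $L^q$. A secondary issue, handled by iteration, is extending to $\alpha\ge 1$ via higher-order differences and a repeated application of the bilinear Leibniz identity.
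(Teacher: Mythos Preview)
Your approach shares the paper's core idea---transferring the spherical derivative to $SO(3)$ and exploiting the rotation invariance of $m$---but the execution diverges at the key step. The paper does not attempt any finite-difference characterisation of $\Hl^\alpha_q$. Instead, after the kernel bound and the observation $K(Az,Az')=K(z,z')$, a change of variables gives $T_m(P_{k_1}f,P_{k_2}g)(Ax)=\iint K(x-y,x-y')f(Ay)g(Ay')\,dy\,dy'$, so $D_A^\alpha$ passes \emph{inside} the integral and lands on the product $f(Ay)g(Ay')$, viewed as a function of $A\in SO(3)$ for fixed $y,y'$. At that point the paper simply invokes the off-the-shelf fractional Leibniz rule on compact Lie groups due to Coulhon--Russ--Tardivel-Nachef (stated as Lemma~\ref{lem:SO3sob} in the appendix), which is formulated directly for Bessel potential spaces $H^\alpha_q(G)$ and holds for all $\alpha\ge 0$. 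This completely sidesteps the obstacle you flagged (Bessel vs.\ Sobolev--Slobodeckij) and requires no iteration on higher-order differences. Your scheme is essentially an attempt to re-prove the CRT Leibniz rule by hand via differences; it could be pushed through with a Strichartz square-function or interpolation argument as you note, but once you replace that step by Lemma~\ref{lem:SO3sob} your proof collapses into the paper's.
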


\begin{proof}
We can write
\[T_m(P_{k_1}f,P_{k_2}g)(x)=\int K(x-y,x-y')f(y)g(y')dydy'\]
where the kernel is given by $K(x,y)=\int
m(\xi,\eta)\chi_{k_1}(\xi)\chi_{k_2}(\eta)e^{ix\xi+iy\eta}d\xi
d\eta$. From the assumption on $m$, and integration by parts, we get
a pointwise bound of the kernel:
\begin{align}\label{eq:pointwiseK}
|K(x,y)|\les 2^{3k_1}(1+|2^{k_1}x|)^{-4}2^{3k_2}(1+|2^{k_2}y|)^{-4}.
\end{align}
Since $m(A\xi,A\eta)=m(\xi,\eta)$, then $K(Ax,Ay)=K(x,y)$ for any
$A\in SO(3)$. Then we have
\begin{align*}
\norm{T_m(P_{k_1}f,P_{k_2}g)}_{\Lr_r^{p}\Hl^\al_{q}}\les&
\norm{T_m(P_{k_1}f,P_{k_2}g)}_{\Lr_r^{p}L^q_{\omega}}+\norm{D_\omega^\alpha T_m(P_{k_1}f,P_{k_2}g)}_{\Lr_r^{p}L^q_{\omega}}\\
:=&I+II.
\end{align*}
We only consider the term $II$ since term $I$ can be handled in an
easier way. By the fractional derivative on $SO(3)$ (see the
appendix) we get
\begin{align*}
II=&\norm{A(D_\omega^\al T_m(P_{k_1}f,P_{k_2}g))}_{L_x^pL_A^q}=\norm{D_A^\al A(T_m(P_{k_1}f,P_{k_2}g))}_{L_x^pL_A^q}\\
=&\normo{ \int K(x-y,x-y')D_A^\al[f(Ay)g(Ay')]dydy'}_{L_x^pL_A^q}\\
\les&
\norm{f}_{\Lr_r^{p_1}\Hl^{\al}_{q_1}}\norm{g}_{{\Lr_r^{p_2}L_\omega^{q_2}}}+\norm{f}_{\Lr_r^{p_1}L_\omega^{\tilde
q_2}}\norm{g}_{{\Lr_r^{p_2}\Hl^{\al}_{\tilde q_1}}}
\end{align*}
where we used Lemma \ref{lem:SO3sob} in the last step.
\end{proof}

Now we follow the proof with slight modifications in \cite{GN} to
prove Theorem \ref{thm2}. It suffices to prove the nonlinear
estimates. Fix $s>3/4$. The following two lemmas can be proved
similarly as Lemma 3.2-3.3 in \cite{GN}. The main difference is that
we use Lemma \ref{lem:bilinear} for every bilinear dyadic piece.
\begin{lem}[Bilinear terms I]\label{lem:bi1}
(1) For any $N$ and $u$, we have
\begin{align*}
 \|(Nu)_{LH}\|_{L^1_tH^{1,s}_\omega} \les& \|N\|_{L^2_t\dot B^{-1/4-\e,s}_{(q(-\e),2+),\omega}}\|\jb{D}u\|_{L^2_t \dot
 B^{1/4+\e,s}_{(q(\e),\gamma(\e)-),\omega}},\\
 \|(Nu)_{HH}\|_{L^1_tH^{1,s}_\omega} \les& \|N\|_{L^2_t\dot B^{-1/4-\e,s}_{(q(-\e),2+),\omega}}\|\jb{D}u\|_{L^2_t \dot
 B^{1/4+\e,s}_{(q(\e),\gamma(\e)-),\omega}}.
\end{align*}

(2) If $0\leq \theta\leq 1$, $\frac{1}{\tilde
q}=\frac{1}{2}-\frac{\theta}{2}$, $\frac{1}{\tilde
r}=\frac{1}{4}+\frac{\theta}{3}+\frac{\e}{3}$, then for any $N$ and
$u$
\begin{align*}
\|(Nu)_{\alpha L}\|_{\LR{D}^{-1}L^{\tilde q'}_t \dot
B^{\frac{3}{2}-\frac{2}{\tilde q}-\frac{3}{\tilde r},s}_{(\tilde
r',2),\omega}} \lec \|N\|_{L^2_t\dot
B^{-1/4-\e,s}_{(q(-\e),2+),\omega}}\|u\|_{L_t^\infty
H_\omega^{0,1}\cap L^2_t \dot
 B^{1/4+\e,s}_{(q(\e),\gamma(\e)-),\omega}}.
\end{align*}
\end{lem}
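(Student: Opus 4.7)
The strategy is to reduce each bilinear product to a sum over dyadic frequency blocks, apply Lemma \ref{lem:bilinear} to distribute the angular derivative $\Lambda_\omega^s$ onto one factor, and then close the estimates with H\"older in time and the Strichartz norms defining $X,Y$. The argument follows Lemma 3.2--3.3 of \cite{GN} in its overall flow; the new ingredient is that the angular regularity must now be handled by Lemma \ref{lem:bilinear} at every dyadic step, rather than by a pointwise product rule on the sphere.

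For part (1), I decompose $N = \sum_{k_1}P_{k_1}N$ and $u=\sum_{k_2}P_{k_2}u$. In the low-high case $k_1\le k_2-5$ the output is supported at frequency $\sim 2^{k_2}$, so I pair the derivative $\jb{D}$ with the high factor. Lemma \ref{lem:bilinear} (applied to the cutoff symbol, which is $SO(3)$-invariant) then yields, for each dyadic pair,
\EQ{
 \norm{P_{k_2}(P_{k_1}N\cdot P_{k_2}u)}_{\Lr_r^2\Hl_2^s}
 \les \norm{P_{k_1}N}_{\Lr_r^{q(-\e)}L_\omega^{2+}}\norm{\jb{D}P_{k_2}u}_{\Lr_r^{q(\e)}\Hl_{\gamma(\e)-}^s}
}
plus the symmetric term in which $\Lambda_\omega^s$ lands on $N$. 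Integrating in time via H\"older ($L^2_tL^2_t\hookrightarrow L^1_t$) and summing the dyadic pieces using the $\ell^2$-Besov structure of the $X,Y$-norms (together with the exponential frequency gap $k_1\le k_2-5$, which gives a geometric sum in $k_1$) produces the stated $L^1_tH^{1,s}_\omega$ bound. The $HH$ case is analogous: here $k_1\sim k_2$ and the output frequency is $\le \max(k_1,k_2)$, so one pays a factor $2^{k_2}$ of derivative loss on the low output, which is absorbed by the fact that both inputs carry the relevant Besov weight; Cauchy--Schwarz in the dyadic index closes the sum.

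For part (2), the $\alpha L$ interaction means one input $u$ is at high frequency $2^{k_2}$ with $|k_2-\log_2\alpha|\le C$, while $N$ is at low frequency $k_1\le k_2-5$; the resonance cutoff in $\omega$ is harmless since $k_2$ is confined to a bounded range. I again apply Lemma \ref{lem:bilinear} to the bilinear piece and then interpolate between the two endpoints $\theta=0$ and $\theta=1$: at $\theta=0$ one uses the $L^2_t$ Strichartz norm from $X$ in the $\dot B^{1/4+\e,s}_{(q(\e),\gamma(\e)-),\omega}$ scale for $u$, while at $\theta=1$ one uses $u\in L^\infty_tH^{0,1}_\omega$. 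The exponents $(\tilde q,\tilde r)$ in the statement are exactly those produced by bilinear H\"older at interpolation parameter $\theta$, and the scaling $\tfrac{3}{2}-\tfrac{2}{\tilde q}-\tfrac{3}{\tilde r}$ is the admissible regularity for the output under this pairing.

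The main obstacle is the dyadic summation with angular regularity: naively $\Lambda_\omega^s$ does not commute with the projection $P_k$, so one cannot simply factor it onto one input. Lemma \ref{lem:bilinear} resolves this by transferring the sphere derivative to a derivative on $SO(3)$, where the invariance of the multiplier under rotations lets us pull $D_A^s$ through the bilinear kernel and land it on whichever factor is convenient. Once this is done, the only remaining delicacy is to avoid any $\log$-divergence in the low-frequency sum for the $HH$ and $\alpha L$ cases, which is handled exactly as in \cite{GN} by exploiting the $\e$-room built into the exponents $q(\pm\e),\gamma(\e)-,2+$.
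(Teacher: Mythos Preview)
Your approach matches the paper's exactly: the paper's entire proof is the sentence ``can be proved similarly as Lemma 3.2--3.3 in \cite{GN}; the main difference is that we use Lemma \ref{lem:bilinear} for every bilinear dyadic piece,'' and your sketch fleshes out precisely this. One small slip: in the $\alpha L$ interaction it is $N$ that sits at frequency $\sim\alpha$ while $u$ is the low-frequency factor (consistent with the symbol $\chi_k(\xi)\chi_{\le k-5}(\eta)$ in $\omega$, where $\xi$ is the $N$-frequency), so swap the roles in your description of part (2); the strategy is otherwise unchanged.
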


\begin{lem}[Bilinear terms II]\label{lem:bi2} (1) For any $u$, we have
\EQ{
 \|D(u\bar u)_{HH}\|_{L^1_tH^{0,s}_\omega} \lec \|u\|_{L^2_t\dot B^{1/4-\e,s}_{(q(-\e),\gamma(\e)-),\omega}}\|\jb{D}u\|_{L^2_t \dot B^{1/4+\e,s}_{(q(\e),\gamma(\e)-),\omega}}.}

(2) If $0\leq \theta\leq 1$, $\frac{1}{\tilde
q}=\frac{1}{2}-\frac{\theta}{2}$, $\frac{1}{\tilde
r}=\frac{1}{4}+\frac{\theta}{3}-\frac{\e}{3}$, then \EQ{
 \|D(u\bar u)_{\alpha L+L\al}\|_{L^{\tilde q'}_t \dot
B^{\frac{3}{2}-\frac{1}{\tilde q}-\frac{3}{\tilde r},s}_{(\tilde
r',2),\omega}} \lec \|\jb{D}u\|_{L_t^\infty H_\omega^{0,s}\cap L^2_t
\dot B^{1/4+\e,1}_{(q(\e),\gamma(\e)-)},\omega}^2.}
\end{lem}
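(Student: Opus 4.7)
The plan is to follow the dyadic paraproduct argument of Lemma~3.3 in \cite{GN}, decomposing $u,\bar u$ into Littlewood--Paley pieces $P_{k_1}u, P_{k_2}\bar u$, isolating the relevant frequency interaction ($HH$ for part~(1), $\alpha L+L\alpha$ for part~(2)), applying H\"older's inequality in time to split the product into two space-time factors, and then invoking Lemma~\ref{lem:bilinear} piece-by-piece to deposit the angular derivative $\Lambda_\omega^s$ onto one of the two factors while pairing the other in a plain $L^q_\omega$ norm. After a natural rescaling of each dyadic piece, the bilinear symbol $m(\xi,\eta)=|\xi+\eta|\chi(\xi)\chi(\eta)$ is $SO(3)$-invariant and satisfies the Mihlin-type bounds demanded by Lemma~\ref{lem:bilinear} uniformly in the dyadic parameters.

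\textbf{Part (1).} For the $HH$ interaction the output frequency $2^k$ satisfies $k\le \min(k_1,k_2)+O(1)$ with $|k_1-k_2|\le O(1)$, so the outer derivative $D$ costs at most $2^{k_1}$, which is exactly reproduced by the $\jb{D}$ on the right-hand side. After H\"older in time, one factor is measured in $L^2_t \dot B^{1/4-\e,s}_{(q(-\e),\gamma(\e)-),\omega}$ and the other in $L^2_t \dot B^{1/4+\e,s}_{(q(\e),\gamma(\e)-),\omega}$; the radial exponents satisfy $1/q(\e)+1/q(-\e)=1/2$, so their product sits in $\Lr^2_r$ as required by the target $H^{0,s}_\omega = L^2_r \cH^s_2$. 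Lemma~\ref{lem:bilinear} places $\Lambda_\omega^s$ on either factor, the other term being absorbed by the Sobolev embedding $\cH^s_{\gamma(\e)-}\hookrightarrow L^{\gamma(\e)-}_\omega$ on $\cir^2$, valid since $s>3/4$. Finally, summation over $k_1\ge k-O(1)$ converges thanks to the mismatch $\pm\e$ between the two radial Besov indices, which produces a geometric gain in $k_1-k$ after Cauchy--Schwarz.

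\textbf{Part (2).} For $\alpha L+L\alpha$ interactions one factor sits at frequency $\sim\alpha$ while the other lives at a much lower frequency, so the output is supported near frequency $\alpha$ and $D$ contributes a bounded constant. I would interpolate the output estimate between two extremal cases: at $\theta=0$ both factors are placed in the $L^2_t$ Strichartz norm (giving $L^1_t$ in time after H\"older, so $\tilde q=2$), while at $\theta=1$ one factor is placed in the energy norm $L^\infty_t H^{0,s}_\omega$ and the other in the same Strichartz norm (giving $L^2_t$ in time, so $\tilde q=\infty$). The formulas $1/\tilde q=(1-\theta)/2$ and $1/\tilde r=1/4+\theta/3-\e/3$ are the resulting convex combinations, and the Besov regularity index $3/2-1/\tilde q-3/\tilde r$ is the scaling offset needed to pass from the interior radial exponent to the dual exponent $\tilde r'$ required by the Duhamel step of the Zakharov analysis. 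Lemma~\ref{lem:bilinear} again handles the angular weight $\Lambda_\omega^s$ on every dyadic piece, and the frequency localization of the high factor at $\alpha$ kills the logarithmic divergence in the summation.

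\textbf{Main obstacle.} The principal difficulty is the handling of the \emph{fractional} angular derivative $\Lambda_\omega^s$ with $s>3/4$ and $s\notin\N$. Neither a direct Leibniz rule on $\cir^2$ nor the standard Coifman--Meyer machinery is readily available in this spherical setting, which is why the paper transfers $\Lambda_\omega^s$ to a fractional derivative on $SO(3)$ via the appendix; this is precisely the input that turns Lemma~\ref{lem:bilinear} into a usable bilinear tool. Once that transfer is in place the remaining work is a careful dyadic bookkeeping exercise, and the threshold $s>3/4$ enters only through the Sobolev embedding $\cH^s_2\hookrightarrow L^{q}_\omega$ on $\cir^2$ needed in the final summation step.
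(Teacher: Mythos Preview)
Your approach is exactly the one the paper indicates: follow the dyadic argument of Lemma~3.3 in \cite{GN} and apply Lemma~\ref{lem:bilinear} on each bilinear dyadic piece to distribute the fractional angular derivative via the $SO(3)$ transfer. Two small slips to correct: the spherical Sobolev embedding you actually need is $\Hl^s_{\gamma(\e)-}\hookrightarrow L^{q_2}_\omega$ with $1/q_2=1/2-1/(\gamma(\e)-)$ (the embedding into $L^{\gamma(\e)-}_\omega$ you wrote is trivial and does not use $s>3/4$), and in part~(2) the endpoints are swapped --- $\theta=0$ gives $\tilde q'=2$ so one factor sits in $L^\infty_t$ and one in $L^2_t$, while $\theta=1$ gives $\tilde q'=1$ so both factors are in $L^2_t$.
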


\begin{rem}
In application, we will use Lemma \ref{lem:bi1} (2) and Lemma
\ref{lem:bi2} (2) by fixing a $0<\theta_0\ll 1$ such that by this
choice $(\tilde q,\tilde r)$ is admissible to apply Corollary
\ref{cor:str}.
\end{rem}

For the boundary terms and cubic terms, we can use Lemma
\ref{lem:bilinear} to prove the estimates similarly as Lemma 3.5 and
Lemma 3.7 obtained in \cite{GN}.

\section{Appendix: Sobolev spaces on $SO(3)$}

In the appendix, we collect some facts about the Sobolev spaces on
$SO(3)$. It is well-known that $G=SO(3)$ is a compact Lie group,
with a Haar measure $\mu$. Let $X_1(t),X_2(t),X_3(t)$ denote the
subgroups with $x,y,z$-axis as axis of rotations, namely
\begin{align*}
\begin{pmatrix} 1&0&0\\0 &\cos t & -\sin t \\
0&\sin t & \cos t
\end{pmatrix},\,
\begin{pmatrix} \cos t & 0 & -\sin t \\
0 & 1 &0\\\sin t &0& \cos t
\end{pmatrix},\,
\begin{pmatrix} \cos t & -\sin t &0 \\
\sin t & \cos t &0\\ 0&0&1
\end{pmatrix},\, t\in \R.
\end{align*}
Let $X_j$ be the vector fields induced by $X_j(t)$, $j=1,2,3$,
namely for $f\in C^\infty (G)$
\[X_j(f)=\frac{d}{dt}\bigg{|}_{t=0}f(AX_j(t)).\]
Define the Laplacian operator $\Delta_A$ on $G$ by
$\Delta_A=\sum_jX_j^2$ and $D_A=(-\Delta_A)^{1/2}$. Then we define
the Sobolev space $H^s_p(G)$ by the norm
$\norm{f}_{H^s_p(G)}=\norm{f}_{L_\mu^p(G)}+\norm{D_A^{s}f}_{L_\mu^p(G)}$.

$\Des$ denotes the Laplace-Beltrami operator on the unite sphere
$\cir^{2}$ endowed with the standard metric $g$ and measure
$d\omega$ and $D_\omega=(-\Des)^{1/2}$. Let $\tilde
X_{1}=x_2\p_{x_3}-x_3\p_{x_2}$, $\tilde
X_{2}=x_3\p_{x_1}-x_1\p_{x_3}$, $\tilde
X_{3}=x_1\p_{x_2}-x_2\p_{x_1}$. It is well-known that for $f\in
C^2(\R^3)$
\[\Delta_\omega(f)(x)=\sum_{1\leq j\leq 3}\tilde X_{j}^2(f)(x).\]
For $f\in C^2(\R^3)$, $A\in SO(3)$, define the action $A(f)=f(Ax)$.
It is easy to see that $X_j[f(Ax)]=A(\tilde X_j(f))$, and hence
$(-\Delta_A)^s[f(Ax)]=A((-\Des)^sf)$. Moreover, we have
$\int_{\cir^2} f(x)d\omega=\int_Gf(Ax)d\mu$. Therefore, we can
transfer freely the fractional derivatives between $\cir^2$ and
$SO(3)$. We will use the fractional Leibniz rule on $SO(3)$ which
was proved in \cite{CRT}.
\begin{lem}[Theorem 4, \cite{CRT}]\label{lem:SO3sob}
Let $\alpha \geq 0$, $p_1,q_2\in (1,\infty]$ and $r,p_2,q_1\in
(1,\infty)$ such that $1/r=1/p_i+1/q_i,i=1,2$. Then
\[\norm{D_A^\alpha (fg)}_{L^r_\mu (G)}\les \norm{f}_{p_1}\norm{D_A^\al g}_{q_1}+\norm{g}_{q_2}\norm{D_A^\al f}_{p_2}.\]
\end{lem}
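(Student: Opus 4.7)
The plan is to mimic the Euclidean Kato–Ponce inequality by first building a Littlewood–Paley calculus adapted to the sub-Laplacian $-\Delta_A$ on $G=SO(3)$, and then running the Bony paraproduct argument. Since $G$ is a compact Lie group equipped with a bi-invariant metric, $-\Delta_A$ is essentially self-adjoint with discrete spectrum, and the heat semigroup $e^{t\Delta_A}$ satisfies Gaussian upper bounds together with analogous bounds on its space derivatives. This is enough to invoke a Mihlin–H\"ormander-type multiplier theorem on $G$.

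First I would fix smooth cutoffs $\fy_0,\fy$ with $\fy_0(\la)+\sum_{k\geq 1}\fy(2^{-k}\la)=1$ and define frequency projectors $P_0 f=\fy_0(D_A)f$, $P_k f=\fy(2^{-k}D_A)f$ via spectral calculus. From the heat kernel bounds I would derive: (i) uniform $L^p_\mu$ boundedness of $P_k$ for $1<p<\infty$; (ii) Bernstein-type inequalities $\norm{D_A^\al P_k f}_{L^r_\mu}\sim 2^{k\al}\norm{P_k f}_{L^r_\mu}$; (iii) the Littlewood–Paley square function characterization $\norm{f}_{L^r_\mu}\sim \normo{\big(\sum_k|P_k f|^2\big)^{1/2}}_{L^r_\mu}$ for $1<r<\infty$; and (iv) the vector-valued Fefferman–Stein maximal inequality on $G$. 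Next I would split
\[fg=\sum_k S_{k-3}f\cdot P_k g+\sum_k S_{k-3}g\cdot P_k f+\sum_{|k-k'|\leq 2}P_k f\cdot P_{k'}g,\]
with $S_k=\sum_{j\leq k}P_j$. For the first paraproduct, Bernstein gives that $D_A^\al(S_{k-3}f\cdot P_k g)$ is essentially supported at frequency $2^k$, so the square-function characterization and Fefferman–Stein reduce the estimate to
\[\normb{D_A^\al \sum_k S_{k-3}f\cdot P_k g}_{L^r_\mu}\les \normb{\big(\sum_k|S_{k-3}f|^2\cdot 2^{2k\al}|P_k g|^2\big)^{1/2}}_{L^r_\mu}\les \norm{Mf}_{L^{p_1}_\mu}\norm{D_A^\al g}_{L^{q_1}_\mu},\]
which is bounded by $\norm{f}_{p_1}\norm{D_A^\al g}_{q_1}$ using the $L^{p_1}_\mu$-boundedness of the Hardy–Littlewood maximal operator $M$ on $G$ (which holds since $p_1>1$; the borderline $p_1=\infty$ is handled by the trivial pointwise bound $|S_{k-3}f|\les \norm{f}_\I$). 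The symmetric paraproduct yields the second term on the right-hand side in the same way. The diagonal remainder is \emph{not} frequency-localized at scale $2^k$; here Bernstein gives $\norm{D_A^\al(P_k f\cdot P_{k'}g)}_{L^r_\mu}\les 2^{k\al}\norm{P_k f}_{L^{p_i}_\mu}\norm{P_{k'}g}_{L^{q_i}_\mu}$, and then the $2^{k\al}$ factor may be absorbed into either factor using $\al\geq 0$ and Cauchy–Schwarz in $k$, producing both halves of the claimed bound after summing.

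The main obstacle is putting items (i)–(iv) in place on $SO(3)$ in a form strong enough to drive the argument, particularly the vector-valued Fefferman–Stein inequality and the Mihlin-type multiplier theorem attached to $-\Delta_A$. Once this analytic infrastructure is set up (which is standard in the compact Lie group setting via the heat kernel and Calder\'on–Zygmund theory on spaces of homogeneous type), the Kato–Ponce proof transfers essentially verbatim from $\R^n$. Compactness of $G$ is a mild simplification: there is no genuinely low-frequency issue since $P_0$ is trivially $L^p_\mu$-bounded and commutes with smooth functional calculus in $D_A$.
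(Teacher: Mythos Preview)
The paper does not prove this lemma; it is simply quoted as Theorem~4 of Coulhon--Russ--Tardivel-Nachef \cite{CRT}, and the appendix only records how to pass between $\cir^2$ and $SO(3)$ so that the cited result can be invoked. There is thus no in-paper argument to compare against.

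Your paraproduct outline is the natural route and, for $G=SO(3)$ specifically, it works. One step needs a different justification, though: you write that ``Bernstein gives that $D_A^\al(S_{k-3}f\cdot P_k g)$ is essentially supported at frequency $2^k$,'' but Bernstein inequalities are norm estimates and say nothing about the spectral support of a product. On a general Riemannian manifold this localization can genuinely fail, and that is exactly where a direct transplant of the Euclidean argument breaks down. On $SO(3)$ you are saved by representation theory: by Peter--Weyl the spectral projectors of $-\Delta_A$ project onto matrix coefficients of the $(2l+1)$-dimensional irreducible representations (with eigenvalue $l(l+1)$), and the Clebsch--Gordan rule says that the tensor product of the representations indexed by $l_1$ and $l_2$ decomposes into those with $|l_1-l_2|\le l\le l_1+l_2$. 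Hence a product of functions spectrally supported in $\{l\le 2^{k-3}\}$ and $\{l\sim 2^k\}$ is again spectrally supported in $\{l\sim 2^k\}$, and the diagonal piece lands in $\{l\les 2^k\}$. With this in place of your ``Bernstein'' claim, the remaining ingredients (square-function equivalence, Fefferman--Stein, Hardy--Littlewood maximal bounds on a space of homogeneous type) are standard and your outline goes through. The cited paper \cite{CRT} proves the inequality in far greater generality (unimodular Lie groups, manifolds satisfying heat-kernel bounds) where Clebsch--Gordan is unavailable, so their argument is necessarily more robust than the one you sketch; but for $SO(3)$ your approach is adequate.
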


\subsection*{Acknowledgment}
The author would like to thank Kenji Nakanishi for the precious
suggestions and Xiaolong Han for helpful discussions. This work is
supported in part by NNSF of China (No.11371037) and ARC Discovery
Grant DP110102488.

\end{document}